\theoremstyle{definition}
\newtheorem{definition}{Definition}[section]
\newtheorem{remark}[definition]{Remark}
\theoremstyle{plain}
\newtheorem{theorem}[definition]{Theorem}
\newtheorem{proposition}[definition]{Proposition}
\newtheorem{lemma}[definition]{Lemma}
\newtheorem{corollary}[definition]{Corollary}
\numberwithin{equation}{section}
\title[Hyperbolicity of nonseparating curve graphs]{Uniform hyperbolicity of nonseparating curve graphs of nonorientable surfaces}
\author[E.~Kuno]{Erika Kuno}
\address{
(Erika Kuno)
Department of Mathematics,
Graduate School of Science,
Osaka University,
1-1 Machikaneyama-cho, Toyonaka, Osaka 560-0043, Japan
}
\email{e-kuno@math.sci.osaka-u.ac.jp}
\date{\today}
\keywords{Curve graphs; Gromov hyperbolicity; nonseparating curves; nonorientable surfaces}
\subjclass[2020]{20F65, 05C25, 57K20}
\begin{document}
\begin{abstract}
Let $N$ be a connected finite type nonorientable surface with or without boundary components and punctures. We prove that the graph of nonseparating curves of $N$ is connected and Gromov hyperbolic with a constant which does not depend on the topological type of the surface by using the bicorn curves introduced by Przytycki and Sisto. The proof is based on the argument by Rasmussen on the uniform hyperbolicity of graphs of nonseparating curves for finite type orientable surfaces.
\end{abstract}
\maketitle

\section{Introduction}
\label{Introduction}
Let $N=N_{g,p}^{f}$ be a connected finite type nonorientable surface of genus $g$ with $f$ boundary components and $p$ punctures and $S=S_{g,p}^{f}$ be a connected finite type orientable surface of genus $g$ with $f$ boundary components and $p$ punctures. When $f=0$ or $p=0$, we drop the suffix that denotes $0$, excepting $g$, from $N_{g, p}^{f}$ and $S_{g, p}^{f}$. For example, $N_{g, 0}^{0}$ is simply denoted as $N_{g}$. If we are not interested in whether a given surface is orientable or not, we denote the surface by $F$.

A simple closed curve $c$ on $F$ is said to be {\it one-sided} if the regular neighborhood of $c$ is a M\"obius band, and $c$ is said to be {\it two-sided} if the regular neighborhood of $c$ is an annulus. Moreover a simple closed curve $c$ on $F$ is {\it nonseparating} if the complement $F\backslash c$ of $c$ in $F$ is connected, and $c$ is {\it separating} if the complement $F\backslash c$ of $c$ in $F$ is not connected. We say a curve $c$ is {\it essential} if $c$ is nonseparating, or $c$ is separating and does not bound a disk, a disk with a puncture, an annulus, or a M\"obius band. In this paper, a curve on $F$ means a simple closed curve on $F$.

The {\it curve graph} $\mathcal{C}(F)$ of $F$ is the simplicial graph whose vertex set consists of the homotopy classes of all essential simple closed curves on $F$ and whose edge set consists of all non-ordered pairs of vertices which can be realized disjointly on $F$. We define the {\it nonseparating curve graph} $\mathcal{NC}(F)$ of $F$ as the full subgraph of $\mathcal{C}(F)$ consisting of all homotopy classes of nonseparating curves on $F$. We define the distances $d_{\mathcal{C}}(\cdot, \cdot)$ on $\mathcal{C}(F)$ and $d_{\mathcal{NC}}(\cdot, \cdot)$ on $\mathcal{NC}(F)$ by the minimal lengths of edge-paths connecting two vertices in $\mathcal{C}(F)$ and $\mathcal{NC}(F)$, respectively. Thus, we consider $\mathcal{C}(F)$ and $\mathcal{NC}(F)$ as geodesic spaces.

We denote by $i(a,b)$ the {\it geometric intersection number} between the homotopy classes of the curves $a$ and $b$ in $F$, that is, the minimal number of intersection points between a representative curve in the homotopy class of $a$ and a representative curve in the homotopy class of $b$. Two curves $a$ and $b$ in $F$ are in {\it minimal position} if the number of intersection points between $a$ and $b$ is minimal in the homotopy classes of $a$ and $b$. Note that two curves are in minimal position in $F$ if and only if they do not bound a bigon on $F$ (see~\cite[Proposition 2.1]{Stukow17} for nonorientable surfaces).  

For orientable surfaces, first Masur and Minsky~\cite{Masur--Minsky99} proved that the curve graphs of orientable surfaces are Gromov hyperbolic. 
Aougab~\cite{Aougab13}, Bowditch~\cite{Bowditch14}, Clay, Rafi, and Schleimer~\cite{Clay--Rafi--Schleimer14}, and Hensel, Przytycki, and Webb~\cite{Hensel--Przytycki--Webb15} independently proved that the curve graphs of orientable surfaces are {\it uniformly hyperbolic}, that is, one can choose hyperbolicity constants which do not depend on the topological types of the surfaces. Moreover, Rasmussen~\cite{Rasmussen20} proved that the nonseparating curve graphs of orientable surfaces are also uniformly hyperbolic. We note that by using the result of Rasmussen, Bowden, Hensel, and Webb~\cite{Bowden--Hensel--Webb22} proved that for orientable surfaces ``fine curve graphs'' are Gromov hyperbolic and studied the bounded cohomology of orientable surface diffeomorphism groups.

For nonorientable surfaces, Bestvina and Fujiwara~\cite{Bestvina--Fujiwara07} proved that the curve graphs of nonorientable surfaces are Gromov hyperbolic (Masur and Schleimer~\cite{Masur--Schleimer13} gave another proof), and the author~\cite{Kuno16} studied the uniform hyperbolicity of curve graphs for nonorientable surfaces by using the unicorn paths argument introduced by Hensel, Przytycki, and Webb~\cite{Hensel--Przytycki--Webb15}. 


In this paper, we generalize Rasmussen's result to the nonorientable surfaces, that is, we prove the uniform hyperbolicity of the nonseparating curve graphs of finite type nonorientable surfaces:

\begin{theorem}\label{unif_hyp_of_nonseparating_curve_graph}
There exists a constant $\delta>0$ such that for any finite type nonorientable surface $N$ of genus $g=1$ and $f+p\geq 3$, of genus $g=2$ and $f+p\geq 1$, or of genus $g\geq 3$, the nonseparating curve graph $\mathcal{NC}(N)$ of $N$ is connected, has infinite diameter, and $\delta$-hyperbolic.
\end{theorem}

We remark that for nonorientable surfaces of genus 1 and 2, the nonseparating curve graphs are not connected or consists of exactly one isolated vertex. Hence we consider ``augmented'' nonseparating curve graphs (and we call them also nonseparating curve graphs here) for the nonorientable surfaces of genus 1 and 2 (see Remark~\ref{disconnectedness_nonseparating_curve_graphs_for_low_genus}). The nonseparating curve graphs of  nonorientable surfaces of genus $g=1$ and $f+p\geq 3$ or of genus $g=2$ and $f+p\geq 1$ has infinite vertices.
We treat the nonorientable surfaces of genus 1 and 2 in Section~\ref{The_cases_of_genus_1 and_2}.

%

We also note that as an analogy of Bowden, Hensel, and Webb~\cite{Bowden--Hensel--Webb22}, Theorem~\ref{unif_hyp_of_nonseparating_curve_graph} also has an application to study Gromov hyperbolicity for fine curve graphs of nonorientable surfaces and the bounded cohomology of nonorientable surface diffeomorphism groups (\cite{Kimura--Kuno21}).


We prove Theorem \ref{unif_hyp_of_nonseparating_curve_graph} by using the bicorn curves defined by Przytycki and Sisto~\cite{Przytycki--Sisto17} and applying the argument of Rasmussen~\cite{Rasmussen20} to nonorientable surfaces, but with some modifications.
Here we list some of the differences from the case of orientable surfaces.

\begin{itemize}
 \item Rasmussen~\cite{Rasmussen20} uses the signs of intersection points between two curves in the argument. Although the signs of intersection points between two curves cannot be defined on nonorientable surfaces, the argument by Rasmussen also makes sence by considering the signs of intersection points on each regular neighborhood of an arc including the intersection points.
 \item While the signs of intersection points between two curves are not enough to calculate the geometric intersection number of the two curves for nonorientable surfaces, additional properties, that is, one-sidedness and two-sidedness of curves allow us to deside the geometric intersection numbers of the two curves.
 \item In the proof of Lemma~\ref{connectedness_bicorn_graph}, we can not show the same claim as that of orientable surfaces since the signs of intersection points between two curves cannot decide the geometric intersection number between two curves (for the details, see Remark~\ref{difference_between_orientable_and_nonorientable_in_claim}).
\end{itemize}

We conclude this section by noting that by combining  the result of Rasmussen and the result of Aramayona and Valdez~\cite{Aramayona--Valdez18}, it follows that the nonseparating curve graphs $\mathcal{NC}(S)$ for infinite type orientable surfaces $S$ with finite positive genus are uniformly hyperbolic  (see \cite[Corollary 1.2]{Rasmussen20}).
The author does not know whether the nonseparating curve graphs $\mathcal{NC}(N)$ of infinite type nonorientable surfaces $N$ are uniformly hyperbolic. We hope that Theorem~\ref{unif_hyp_of_nonseparating_curve_graph} has some application to study infinite type nonorientable surfaces.

\section{Preliminaries}
\label{Preliminaries}

Let $N$ be a connected finite type nonorientable surface. At the biginning, $N$ may have boundary components and punctures. However, since the difference between bonudary components and punctures will not relavant for us, we will assume that $N$ has no punctures from now on.

In this section, we show the connectedness and the infinite diameterness of $\mathcal{NC}(N)$, i.e.\ the first and the second parts of Theorem~\ref{unif_hyp_of_nonseparating_curve_graph}. Then, we recall the criterion for hyperbolicity and the definition of bicorns, as well as describe the notation for the proof of Theorem~\ref{unif_hyp_of_nonseparating_curve_graph}.

Firstly, we prove that the nonseparating curve graphs are connected:
\begin{proposition}{\rm(cf.\ }\cite[Theorem 4.4]{Farb--Margalit12}{\rm)}\label{connectedness_of_nonsep_curve_graph} 
Let $N$ be a nonorientable surface of genus $g\geq 3$ with $f\geq 0$ boundary components. Then, the nonseparating curve graph $\mathcal{NC}(N)$ of $N$ is connected. 
\end{proposition}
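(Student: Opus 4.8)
The plan is to reduce connectedness of $\mathcal{NC}(N)$ to the already-known connectedness of the curve graph $\mathcal{C}(N)$ of a nonorientable surface, exactly as in the orientable case treated in \cite[Theorem 4.4]{Farb--Margalit12}. First I would fix the standard fact that $\mathcal{C}(N)$ is connected (for $g\geq 3$, with any number of punctures, $\mathcal{C}(N)$ has enough curves that this holds); then it suffices to show that (i) $\mathcal{NC}(N)$ is nonempty, and (ii) any two nonseparating curves $a$ and $b$ can be joined by an edge-path in $\mathcal{NC}(N)$. For (i), note that for $g\geq 3$ one can exhibit a two-sided nonseparating curve directly: cutting along it leaves a connected surface (e.g. take a curve that, together with a one-sided curve, realizes a genus-reducing move; alternatively, a curve dual to a handle in a standard model $N_{g}\cong S_{h}\#$ (some crosscaps) when $g$ is large enough, and handle the small cases $g=3,4,5$ by hand).

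The heart of the argument is a ``surgery'' step. Given an edge-path $a=c_{0}, c_{1}, \dots, c_{n}=b$ in $\mathcal{C}(N)$ connecting the two nonseparating curves, I would push this into $\mathcal{NC}(N)$ one vertex at a time: if some intermediate $c_{i}$ is separating (or one-sided), I would replace it by a nonseparating two-sided curve $c_{i}'$ disjoint from both $c_{i-1}$ and $c_{i+1}$. The key local claim is: \emph{if $c$ is a curve (separating or one-sided) and $c_{i-1}, c_{i+1}$ are disjoint from $c$, then the component of $N\setminus c$ containing neighborhoods of $c_{i-1}$ and $c_{i+1}$ — or the union of components, patched along $c$ — contains a nonseparating two-sided curve disjoint from $c_{i-1}\cup c_{i+1}$.} This uses that $c_{i-1}$ and $c_{i+1}$ are themselves nonseparating, so cutting along them does not disconnect $N$, which forces enough genus (orientable or nonorientable) to survive in the relevant complementary piece; a two-sided nonseparating curve in that piece can then be produced. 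One must treat the case where $c$ is one-sided separately: cut along $c$, getting a surface $N'$ with one more boundary component, cap it, and find the replacement curve there, checking it remains nonseparating and two-sided back in $N$.

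I expect the main obstacle to be the bookkeeping in the surgery step when the complement of an intermediate curve breaks into two pieces, and more delicately, ensuring at each stage that the replacement curve is genuinely \emph{nonseparating in $N$} (not merely in a subsurface) and \emph{two-sided} — a subtlety with no analogue in the orientable setting, since in $N$ a curve can be essential, nonseparating, and one-sided. A clean way to organize this is to first reduce to showing every curve in $\mathcal{C}(N)$ is within distance $1$ (in $\mathcal{C}(N)$) of $\mathcal{NC}_{\mathrm{two}}(N)$, i.e. of the two-sided nonseparating curves: for any $c$, exhibit a two-sided nonseparating curve disjoint from $c$ by examining $N\setminus c$ and using $g\geq 3$. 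Given that, the path in $\mathcal{C}(N)$ can be ``thickened'' to a path in $\mathcal{NC}(N)$ by replacing each vertex with a nearby nonseparating one and connecting consecutive replacements through the original intervening vertex (or through a further auxiliary curve when the intervening vertex is itself separating). Throughout, minimal position of all curves involved is maintained so that the edges of $\mathcal{C}(N)$ — disjointness — are literal, which is harmless since homotopy does not change the separating/nonseparating or one/two-sided type.
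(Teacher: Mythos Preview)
Your broad strategy---take a path in $\mathcal{C}(N)$ and surger away the separating vertices---is exactly what the paper does. But your execution has one unnecessary complication and misses two ideas that make the paper's argument actually go through.

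The unnecessary complication: you repeatedly worry about replacing one-sided curves and insist the replacements be two-sided. In this paper $\mathcal{NC}(N)$ is the full subgraph on \emph{all} nonseparating curves, and every one-sided curve is automatically nonseparating (its regular neighborhood is a M\"obius band, whose complement is connected). So one-sided vertices along the path are already fine; only separating vertices need to be dealt with. All the ``subtlety with no analogue in the orientable setting'' you describe simply does not arise.

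The genuine gap is in your surgery step. You want, for a separating $c_i$, a nonseparating curve disjoint from both $c_{i-1}$ and $c_{i+1}$, and you admit this is where the bookkeeping gets hard. The paper sidesteps this by two moves you do not make. First, it takes a \emph{geodesic} in $\mathcal{C}(N)$: then $c_{i-1}$ and $c_{i+1}$ must intersect (else the path could be shortened), so they lie in the \emph{same} component $F'$ of $N\setminus c_i$. If the other component $F''$ has positive genus, any nonseparating curve in $F''$ is automatically disjoint from both neighbors---no delicate search in ``the component containing $c_{i-1}$ and $c_{i+1}$'' is needed. Second, to handle the remaining case where $F''$ has genus $0$ (which forces $F''$ to absorb at least two punctures), the paper runs an \emph{induction on the number of punctures}: $F'$ is then nonorientable of genus $g$ with strictly fewer punctures, so by the inductive hypothesis $\mathcal{NC}(F')$ is connected and one can replace $c_i$ by an entire path in $\mathcal{NC}(F')$ joining $c_{i-1}$ to $c_{i+1}$. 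Your proposal has no mechanism for this case, and your fallback (``connect consecutive replacements through the original intervening vertex, or through a further auxiliary curve when the intervening vertex is itself separating'') is exactly where a circularity would creep in without the induction.
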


\begin{proof}
The proof follows the argument by Farb and Margalit~\cite[Theorem 4.4]{Farb--Margalit12} for orientable surfaces.

Note that, from Ivanov~\cite{Ivanov87} (see also Szepietowski~\cite[Theorem 6.1]{Szepietowski08}), we know that the usual curve graph $\mathcal{C}(N)$ is connected. We will prove the proposition by induction on the number of boundary components $f$. If $f\leq 1$, let $a, b$ be any pair of vertices of $\mathcal{NC}(N)$. Since $\mathcal{C}(N)$ is connected, we can take a geodesic path $a=c_{0},c_{1},\cdots,c_{n-1},c_{n}=b$ in $\mathcal{C}(N)$. Suppose that $c_{i}$ is a separating curve in $N$, and $N\setminus c_{i}=F'\cup F''$. As we take a geodesic path, the vertices $c_{i-1}$ and $c_{i+1}$ must intersect, and they are included in the same connected component $F'$. Both $F'$ and $F''$ have positive genus because $f\leq 1$. Hence, we can take a nonseparating curve $d$ in $F''$ and replace the separating curve $c_{i}$ with it. By repeating this argument for every separating curve in the geodesic, we obtain a path in $\mathcal{NC}(N)$ that connects $a$ and $b$.

For the induction on $f$, we assume that $f\geq 2$ and proceed as above. The only problem is that a separating curve $c_{i}$ may bound a subsurface $F''$ of genus $0$. In this case, the other subsurface $F'$ containing $c_{i-1}$ and $c_{i+1}$ is a nonorientable surface of genus $g$ that has at most $f-2$ boundary components. By induction, we can find a path connecting $c_{i-1}$ and $c_{i+1}$, and we replace $c_{i}$ by the corresponding sequence.
\end{proof}

\begin{remark}\label{disconnectedness_nonseparating_curve_graphs_for_low_genus}
For $g=1$ and $f\leq 1$, the nonseparating curve graphs consist of an isolated vertex, for $g=1$ and $f=2$, the nonseparating curve graphs consist of two isolated vertices, and for $g=1$ and $f\geq 3$, the nonseparating curve graphs consist of an infinite number of isolated vertices. For $g=2$ and $f\geq 0$, the nonseparating curve graphs are not connected. In fact, the complement of any nonseparating two-sided curve in a nonorientable surface of genus 2 with any number of boundary components is always orientable (and so we cannot take any one-sided curve in the complement). (See also \cite[Section 2.4]{Atalan--Korkmaz14} for the curve graphs in low dimensions for nonorientable surfaces.)
Therefore, we see that a nonseparating curve graph $\mathcal{NC}(N)$ which has at least two vertices is connected if and only if the genus of $N$ is at least three.
We will discuss the hyperbolicity of the ``augmented'' nonseparating curve graphs for a nonorientable surface of genus $2$ with one boundary component in Section~\ref{The_cases_of_genus_1 and_2}.
\end{remark}

Next, we prove that the nonseparating curve graphs have infinite diameters:
\begin{proposition}\label{infinite_diameter}
For any finite type nonorientable surface $N$ of genus $g\geq 3$, the nonseparating curve graph $\mathcal{NC}(N)$ of $N$ has infinite diameter. 
\end{proposition}

\begin{proof}
Let $N=N_{g}^{f}$ be a nonorientable surface and $S$ the orientation double cover of $N$. 
Let $c$ be a vertex of the curve graph $\mathcal{C}(N)$ of $N$. We take a pseudo-Anosov element $\varphi$ of the mapping class group $\mathrm{Mod}(N)$ of $N$, and fix any $n\in\mathbb{Z}$. We write $\mathcal{G}$ as a geodesic which connects $c$ and $\varphi^{n}(c)$ in $\mathcal{C}(N)$. We denote the consecutive vertices in $\mathcal{G}$ by $\delta_{0}, \delta_{1}, \cdots, \delta_{m}$, where $\delta_{0}=c$ and $\delta_{m}=\varphi^{n}(c)$. 
Since $\delta_{i}$ and $\delta_{i+1}$ can be realized disjointly on $N$ for any $i=0,\cdots, m-1$, any pair of lifts of $\delta_{i}$ and $\delta_{i+1}$ as vertices of $\mathcal{C}(S)$ can also be realized disjointly on $S$. 
We take a lift $\gamma\in\mathcal{C}(S)$ of $c=\delta_{0}$. Let $\gamma^{1},\cdots, \gamma^{m-1}$ be lifts of $\delta_{1},\cdots,\delta_{m-1}$, respectively. Moreover, we choose the lift $\tilde{\varphi}^{n}(\gamma)\in\mathcal{C}(S)$ of $\varphi^{n}(c)$, where $\tilde{\varphi}\in\mathrm{Mod}(S)$ is a lift of $\varphi$ which is orientation-preserving, and put $\gamma^{m}=\tilde{\varphi}^{n}(\gamma)$.
Then, $\{\gamma^{i} \}_{i=0}^{m}$ is a path in $\mathcal{C}(S)$ which connects $\gamma$ and $\tilde{\varphi}^{n}(\gamma)$, and the length is the same as that of $\mathcal{G}$. Therefore, we have $d_{\mathcal{C}(S)}(\gamma,\tilde{\varphi}^{n}(\gamma))\leq d_{\mathcal{C}(N)}(c,\varphi^{n}(c))$.
By Masur and Minsky~\cite{Masur--Minsky99} a pseudo-Anosov element $\tilde{\varphi}\in\mathrm{Mod}(S)$ acts on $\mathcal{C}(S)$ loxodromically (see also Przytycki and Sisto~\cite{Przytycki--Sisto17}), and so it follows that there exists a constant $\mathcal{E}>0$ such that $\mathcal{E}|n|\leq d_{\mathcal{C}(S)}(\gamma,\tilde{\varphi}^{n}(\gamma))$. Thus we have $\mathcal{E}|n|\leq d_{\mathcal{C}(N)}(c,\varphi^{n}(c))$ (and we see that any pseudo-Anosov elment of $\mathrm{Mod}(N)$ acts on $\mathcal{C}(N)$ loxodromically). Since the nonseparating curve graph $\mathcal{NC}(N)$ is a full subgraph of the curve graph $\mathcal{C}(N)$, for any $c\in\mathcal{NC}(N)$, there exists $\mathcal{E}>0$ such that
\begin{equation*}
\mathcal{E}|n|\leq d_{\mathcal{C}(N)}(c,\varphi^{n}(c))\leq d_{\mathcal{NC}(N)}(c,\varphi^{n}(c)),
\end{equation*}
and we see that the nonseparating curve graph $\mathcal{NC}(N)$ has infinite diameter.

\end{proof}

We say that a geodesic space is $\delta$-{\it hyperbolic} if, in every geodesic triangle, each side lies in a $\delta$-neighborhood of the union of the other two. The following criterion for hyperbolicity is used in the argument of Rasmussen~\cite{Rasmussen20} to prove uniform hyperbolicity of nonseparating curve graphs $\mathcal{NC}(S)$ for finite type orientable surfaces. We also use it here to prove the uniform hyperbolicity of nonseparating curve graphs $\mathcal{NC}(N)$ for finite type nonorientable surfaces $N$.

\begin{proposition}{\rm(}\cite[Proposition 3.1]{Bowditch14},~\cite[Theorem 3.15]{Masur--Schleimer13}{\rm)}\label{criterion_for_hyperbolicity} 
Let $X$ be a graph and $a$ and $b$ two distinct vertices of $X$. Suppose that $A(a,b)$ is a connected subgraph of $X$ containing $a$ and $b$. Suppose also that there exists a constant $D>0$ such that
\begin{itemize}
\item[(i)] if $a$ and $b$ are joined by an edge, then the diameter of $A(a,b)$ is at most $D$,
\item[(i\hspace{-1pt}i)] for any triple $a$, $b$, $c$ of the vertices of $X$, $A(a,c)$ is contained in the $D$-neighborhood of $A(a,b)\cup A(b,c)$.
\end{itemize} 
Then, $X$ is hyperbolic with a hyperbolicity constant depending only on $D$.
\end{proposition}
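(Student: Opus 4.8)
The plan is to reduce the hyperbolicity of $X$ to the assertion that the assigned connected subgraphs $A(a,b)$ \emph{coarsely coincide with geodesics}, uniformly in $D$: that is, to establish a constant $R=R(D)$ such that for every pair of vertices $a,b$ and every geodesic $[a,b]$ of $X$,
\[
[a,b]\subseteq N_R\big(A(a,b)\big)
\qquad\text{and}\qquad
A(a,b)\subseteq N_R\big([a,b]\big).
\]
Granting this, hyperbolicity is immediate: for a geodesic triangle with vertices $a,b,c$, condition (i\hspace{-1pt}i) gives $A(a,c)\subseteq N_D\big(A(a,b)\cup A(b,c)\big)$, and chaining this with the two displayed inclusions applied to all three sides yields $[a,c]\subseteq N_{2R+D}\big([a,b]\cup[b,c]\big)$. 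Hence every side of every geodesic triangle lies in the $(2R+D)$-neighborhood of the union of the other two, so $X$ is $\delta$-hyperbolic with $\delta:=2R+D$, a constant depending only on $D$.

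To prove the coarse coincidence I would first settle for estimates with a slowly growing error. Fixing a geodesic $[a,b]$ and iterating condition (i\hspace{-1pt}i) --- splitting $[a,b]$ at a midpoint at each stage --- gives $A(a,b)\subseteq N_{kD}\big(\bigcup_j A(y_j,y_{j+1})\big)$ after $k$ subdivisions, where $y_0=a,\dots,y_{2^k}=b$ are spread along $[a,b]$; taking $k\approx\log_2 d(a,b)$ makes consecutive $y_j$ at distance $\le 1$, so by condition (i) each $A(y_j,y_{j+1})$ has diameter $\le D$ and meets $[a,b]$, whence $A(a,b)\subseteq N_{O(D\log(1+d(a,b)))}\big([a,b]\big)$. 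For the other direction I would use that $A(a,b)$ is connected and runs from $a$ to $b$ while a geodesic does not backtrack: a vertex $p\in[a,b]$ splits $[a,b]$ into $[a,p]\cup[p,b]$ with $d(a,p)+d(p,b)=d(a,b)$, so by condition (i\hspace{-1pt}i) a path inside $A(a,b)$ from $a$ to $b$ stays in $N_D\big(A(a,p)\cup A(p,b)\big)$, hence (by the first estimate, applied to the pairs $(a,p)$ and $(p,b)$) close to $[a,p]\cup[p,b]$; since on a geodesic the sub-segments $[a,p]$ and $[p,b]$ are far apart except near $p$, a discrete intermediate-value argument forces that path, and so $A(a,b)$, to come close to $p$. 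This gives $[a,b]\subseteq N_{O(D\log(1+d(a,b)))}\big(A(a,b)\big)$ as well.

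\emph{Second}, and this is the step I expect to be the real obstacle, I would upgrade these logarithmic estimates to a genuinely uniform bound $R=R(D)$ --- this is the technical heart of Bowditch's ``guessing geodesics'' lemma. Let $\eta(n)$ be the largest Hausdorff distance between $A(a,b)$ and a geodesic $[a,b]$ over all pairs with $d(a,b)\le n$; the previous stage shows $\eta(n)=O(D\log(1+n))$, in particular $\eta(n)<\infty$. The subtlety is that the obvious recursion obtained by splitting at a midpoint only reproduces the logarithmic bound, so one must instead analyze the longest sub-segment of a geodesic $[a,b]$ that stays far from $A(a,b)$ (and, dually, the longest detour of $A(a,b)$ away from $[a,b]$), feed the mere finiteness of $\eta$ back into condition (i\hspace{-1pt}i), and extract a self-improving inequality that caps $\eta$ by a constant depending only on $D$. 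Making the ``$[a,p]$ and $[p,b]$ are mutually far away except near $p$'' estimate quantitative and uniform, and arranging that the bootstrap constant does not escape, are the delicate points; the rest (the first and third stages above) is routine. As an honest alternative, one may simply invoke \cite[Theorem 3.15]{Masur--Schleimer13} and \cite[Proposition 3.1]{Bowditch14}, where this argument is carried out in full.
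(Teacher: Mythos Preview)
The paper does not prove this proposition at all: it is stated as a known result and attributed to \cite[Theorem 3.15]{Masur--Schleimer13} and \cite[Proposition 3.1]{Bowditch14}, with no proof or sketch given. So there is no ``paper's own proof'' to compare against; your ``honest alternative'' of simply invoking those references is precisely what the paper does.

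As for the outline itself: it is the correct skeleton of Bowditch's ``guessing geodesics'' argument, and the first stage (logarithmic Hausdorff bound via dyadic subdivision using (ii) and then (i) at the bottom) is accurate. You are also right that the bootstrap from $O(D\log n)$ to a uniform $R(D)$ is the genuine content and that the naive midpoint recursion cannot do it. However, your description of that second stage is only a heuristic, not a proof: you do not actually produce the self-improving inequality, and phrases like ``feed the mere finiteness of $\eta$ back into condition (ii)'' and ``arranging that the bootstrap constant does not escape'' name the difficulty without resolving it. If you intend this as a standalone proof rather than a citation, the gap is exactly there; if you intend it as a citation with motivation, then it is fine and in fact more informative than what the paper provides.
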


In this paper, for a curve $a$ on $N$, we denote by $[a]$ the homology class represented by $a$ in $H_{1}(N,\partial N; \mathbb{Z}_{2})$; the $\mathbb{Z}_{2}$-coefficient relative homology group of $N$.
Useful facts are that
\begin{itemize}
\item a simple closed curve $a$ is separating if and only if $[a]=0$, and
\item a simple closed curve $a$ is nonseparating if and only if there exists a curve $b$ such that $i(a,b)=1$. We remark that one-sided curves are always nonseparating.
\end{itemize}

For an oriented curve or an oriented arc $a$, we write $\overline{xy}_{a}$ for a subarc of $a$ from $x$ to $y$ according to the orientation of $a$. If we reverse the orientation of the arc $\overline{xy}_{a}$, we denote by $-\overline{xy}_{a}$ the reversed oriented arc.

\begin{definition}
Let $a$ and $b$ be two curves (including arcs) on $N$ which intersect at least two times. Choose $x,y\in a\cap b$. Let $\alpha$ and $\beta$ be subarcs of $a$ and $b$, respectively whose endpoints are $x$ and $y$. If $\alpha$ and $\beta$ intersect at their endpoints $x$ and $y$ and nowhere in their interiors, we say that $\alpha\cup\beta$ is a {\it bicorn curve} or simply a {\it bicorn} between $a$ and $b$, and we call $\alpha$ and $\beta$, respectively the $a$-arc and the $b$-arc of the bicorn, and $x$ and $y$ the {\it corners} of the bicorn. We also consider $a$ and $b$ themselves to be bicorns between $a$ and $b$.
\end{definition}


In this paper, for a subset $M$ of a nonorientable surface $N$, we denote by $\mathrm{nbd}(M)$ a regular neigborhood of $M$ in $N$.

\section{Nonseparating curve graphs of nonorientable surfaces are uniformly hyperbolic}
\label{Nonseparating curve graphs of nonorientable surfaces are uniformly hyperbolic}

Let $N$ be any finite type nonorientable surface whose genus is at least $3$. We recall that since the difference between boundary components and punctures is no longer relavant for us, we assume that nonorientable surfaces have no punctures. Unless it causes confusion, abusing the notation, we realize vertices on nonseparating curve graphs as nonseparating curves which are mutually in minimal position from now on.

The goal of this section is to prove the second half of Theorem~\ref{unif_hyp_of_nonseparating_curve_graph}, that is, the uniform hyperbolicity of nonseparating curve graphs for nonorientable surfaces. From this section, we denote by $\mathcal{NC}(N)$ the graph of nonseparating curves defined as follows: the vertices are the homotopy classes of nonseparating curves, and an edge joins two vertices if we can choose representatives of vertices which intersect at most twice. Moreover, we denote by $\mathcal{NC}'(N)$ the nonseparating curve graph defined in the usual way, that is, the vertices are the homotopy classes of nonseparating curves, and the edges correspond to the pairs of vertices which can be realized disjointly. Let $d_{\mathcal{NC}}(\cdot,\cdot)$ and $d_{\mathcal{NC}'}(\cdot,\cdot)$ be the metrics on $\mathcal{NC(N)}$ and $\mathcal{NC}'(N)$, respectively. We can show that $\mathcal{NC(N)}$ and $\mathcal{NC}'(N)$ are quasi-isometric:

\begin{lemma}{\rm(cf. }\cite[Lemma 3.1]{Rasmussen20}{\rm)}\label{nonseparating_curve_graphs_are_quasi-isometric}
Let $a$ and $b$ be any pair of vertices of $\mathcal{NC}'(N)$. Then, we have $d_{\mathcal{NC}'}(a,b)\leq 2i(a,b)+1$.
\end{lemma}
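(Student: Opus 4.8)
The plan is to induct on the geometric intersection number $i(a,b)$ and, at each step, replace one of the two curves by a nonseparating curve that meets it at most twice and meets the other curve strictly fewer times. The base case $i(a,b)=0$ gives $d_{\mathcal{NC}'}(a,b)\le 1$, which is consistent with the bound. For the inductive step, suppose $i(a,b)=n\ge 1$, fix representatives in minimal position, and pick a point $x\in a\cap b$; traversing $a$ from $x$ until the first return to $b$, at a point $y$, cuts off an initial $a$-subarc $\alpha=\overline{xy}_a$ whose interior is disjoint from $b$. Together with either of the two $b$-subarcs $\beta_1,\beta_2$ from $x$ to $y$ it forms a closed curve (after smoothing the corners); at least one of these two candidate bicorns $c_i=\alpha\cup\beta_i$ is essential and can be made simple, and since $\alpha$ is disjoint from $b$ in its interior and each corner is a single point of $a\cap b$, we have $i(c_i,a)\le 2$ while $i(c_i,b)<i(a,b)$ because $c_i$ uses only part of $b$ and shares no transverse interior intersections of $b$ with the new $a$-portion.

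The main obstacle is the same one that appears in Rasmussen's orientable argument, and it is genuinely the crux here: one must guarantee that the replacement curve $c_i$ can be chosen \emph{nonseparating}. In the orientable setting this is handled via algebraic intersection numbers; over a nonorientable surface the natural substitute is the $\mathbb{Z}_2$-homology class $[c_i]\in H_1(N,\partial N;\mathbb{Z}_2)$, using the facts recorded before the definition of bicorns, namely that a curve is nonseparating iff its $\mathbb{Z}_2$-class is nonzero iff some curve meets it exactly once. I would argue as follows: if $b$ is nonseparating there is a curve $e$ with $i(b,e)=1$; intersection number mod $2$ is a homological pairing, and one checks that $[c_1]+[c_2]=[a]$ or $[c_1]+[c_2]=[b]$ in $\mathbb{Z}_2$-homology (the two bicorns differ exactly by one of the two input curves, since $\beta_1\cup\beta_2$ is homologous to $b$ up to the arc $\alpha$ appearing twice, which cancels mod $2$). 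Pairing with the appropriate dual curve and using that $[a]\ne 0$ (resp. $[b]\ne 0$), the mod-$2$ pairing cannot vanish on both $c_1$ and $c_2$, so at least one of them is nonseparating; that one is the curve we keep. One must also rule out the degenerate possibilities that the chosen $c_i$ is inessential or bounds a Möbius band or is homotopic into the boundary — these are handled by a short case analysis, passing to the other bicorn or (in boundary cases) to a nearby curve, exactly as in the orientable case, with the one-sidedness subtleties of $N$ absorbed into the $\mathbb{Z}_2$-homology bookkeeping.

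Granting the replacement claim, the induction closes: we pass from the pair $(a,b)$ with $i(a,b)=n$ to a pair $(c_i,b)$ with $i(c_i,b)\le n-1$, paying one edge in $\mathcal{NC}'(N)$ only when $i(c_i,a)\le 1$, but in general at most two edges since an edge of $\mathcal{NC}'(N)$ requires disjointness rather than the ``at most twice'' of $\mathcal{NC}(N)$; tracking the worst case where each step costs the replacement together with a disjoint intermediary yields
\[
d_{\mathcal{NC}'}(a,b)\le d_{\mathcal{NC}'}(a,c_i)+d_{\mathcal{NC}'}(c_i,b)\le 2+\bigl(2(n-1)+1\bigr)=2n+1=2i(a,b)+1.
\]
I expect the bookkeeping of exactly how many $\mathcal{NC}'$-edges each replacement costs to be the only fiddly point beyond the nonseparating guarantee; the geometric picture (first-return arc, two candidate bicorns) and the $\mathbb{Z}_2$ parity trick are the substantive ingredients.
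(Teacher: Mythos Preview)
Your overall strategy---induction on $i(a,b)$, a bicorn surgery at an innermost arc, and the $\mathbb{Z}_2$-homology identity $[c_1]+[c_2]=[b]$ (or $[a]$) to guarantee one of the two candidates is nonseparating---is exactly the paper's approach. Two concrete problems, however, prevent the argument from going through as written.

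\textbf{The intersection bounds are swapped.} With your construction $c_i=\alpha\cup\beta_i$, where $\alpha\subset a$ has interior disjoint from $b$ and $\beta_i\subset b$, the arc $\beta_i$ may meet $a$ at every point of $a\cap b$ lying in its interior, so $i(c_i,a)$ is \emph{not} bounded by $2$. What is true is the reverse: $i(c_i,b)\le 1$ (only the corners can contribute, and they form a bigon if both persist), while $i(c_i,a)\le i(a,b)-1$. The paper does the symmetric version (innermost $b$-arc, two $a$-arcs) and records the analogous bounds $i(a,c)\le 1$, $i(b,c)\le i(a,b)-1$.

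\textbf{The case $i(a,b)=1$ is the real gap.} Your inductive step needs two distinct corners $x\neq y$; when $i(a,b)=1$ the first-return point along $a$ is $x$ itself and the bicorn construction degenerates. More importantly, the final inequality $d_{\mathcal{NC}'}(a,c_i)\le 2$ in your display (which, after fixing the swap, should read $d_{\mathcal{NC}'}(b,c_i)\le 2$ with $i(b,c_i)\le 1$) is not a formal consequence of anything you have proved: it \emph{is} the statement of the lemma at intersection number $1$, and the inductive bound only gives $2\cdot 1+1=3$, which would propagate to $2n+2$. The paper handles $i(a,b)=1$ by a separate argument that genuinely uses the hypothesis $g\ge 3$: the regular neighborhood of $a\cup b$ is one of $N_{1,2}$, $N_{2,1}$, $S_{1,1}$, so its complement in $N$ still has positive genus and contains a nonseparating curve $c$ disjoint from both $a$ and $b$, giving $d_{\mathcal{NC}'}(a,b)\le 2$. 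Your write-up never invokes $g\ge 3$, which is a signal that this step is missing. (Your aside about ruling out Möbius-band or inessential degenerations is unnecessary: $[c_i]\neq 0$ already forces $c_i$ to be nonseparating and hence essential.)
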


\begin{proof}[Proof of Lemma~\ref{nonseparating_curve_graphs_are_quasi-isometric}]
We will prove the lemma by induction on the intersection number $i(a,b)$ between $a$ and $b$. 
When $i(a,b)=1$, the regular neighborhood $\mathrm{nbd}(a\cup b)$ of $a\cup b$ is any one of $N_{1,2}$, $N_{2,1}$, or $S_{1,1}$. Since the genus of $N$ is at least $3$, at least one component of $N-\mathrm{nbd}(a\cup b)$ has positive genus. We can take a nonseparating curve $c$ on $N-\mathrm{nbd}(a\cup b)$, and $c$ is disjoint from both $a$ and $b$. Hence, we have $d_{\mathcal{NC}'}(a,b)\leq d_{\mathcal{NC}'}(a,c)+d_{\mathcal{NC}'}(c,b)=2\leq 2i(a,b)+1$, and we are done.

When $i(a,b)\geq 2$, we choose an orientation of $b$. We take an intersection point $x\in a\cap b$. Let $y$ be the first point of $a\cap b$ after $x$ along $b$ according to the orientation of $b$. We put $\beta=\overline{xy}_{b}$, and so $\beta$ does not intersect $a$ on its interior. We fix a regular neighborhood $\mathrm{nbd}(\beta)$ of $\beta$. We note that $\mathrm{nbd}(\beta)$ is homeomorphic to a disk, and so it is orientable. We have two cases where the signs in $\mathrm{nbd}(\beta)$ at the intersection points $x$ and $y$ between $a$ and $b$ are the same and different (see Figure~\ref{fig_consective_intersections_between_a_and_b_v2}). We define two curves $c_{1}$ and $c_{2}$ as shown in Figure~\ref{fig_how_to_make_c_1_and_c_2_v3}.

\begin{figure}[ht]
\includegraphics[scale=0.35]{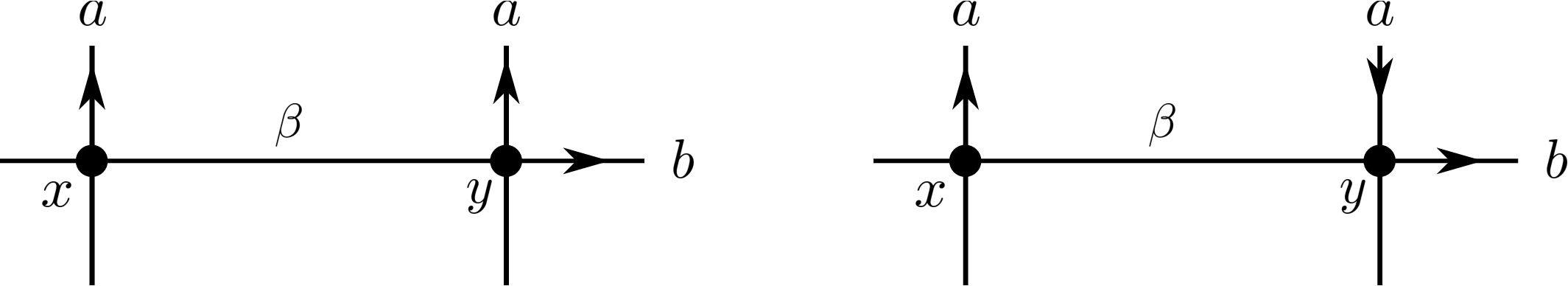}
\caption{Two points $x$ and $y$ of $a\cap b$ consective along $b$ whose signs in  $\mathrm{nbd}(\beta)$ are the same (left-hand side) and different (right-hand side).}\label{fig_consective_intersections_between_a_and_b_v2}
\end{figure}

\begin{figure}[ht]
\includegraphics[scale=0.35]{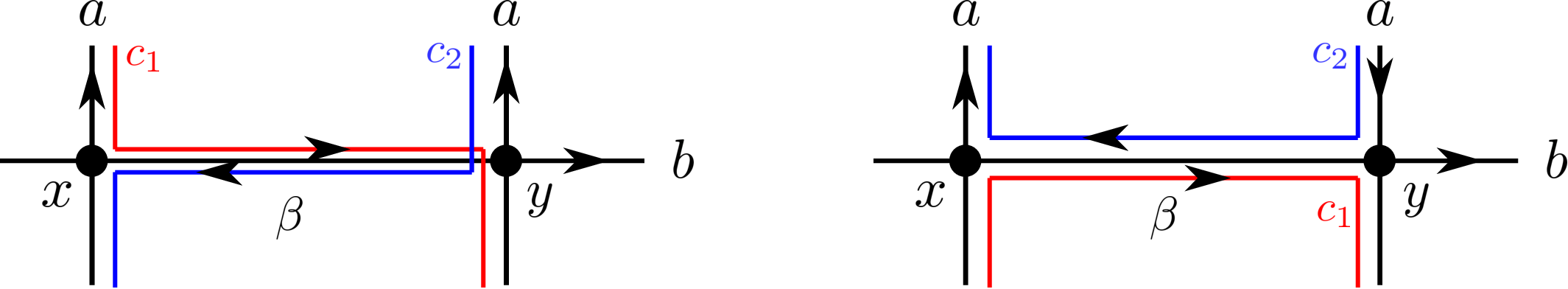}
\caption{The way to make new curves $c_{1}$ and $c_{2}$ when the signs in  $\mathrm{nbd}(\beta)$ at the intersection points $x$ and $y$ between $a$ and $b$ are the same (left) and different (right).}\label{fig_how_to_make_c_1_and_c_2_v3}
\end{figure}

First, consider the case where the signs in  $\mathrm{nbd}(\beta)$ at the intersection points $x$ and $y$ between $a$ and $b$ are the same. It follows that $[a]=[c_{1}]+[c_{2}]$ in $H_{1}(N,\partial N; \mathbb{Z}_{2})$ (again, note that we are considering the $\mathbb{Z}_{2}$-coefficient relative homology group of $N$). Since $a$ is nonseparating, we have $[a]\not =0$, so at least one of $[c_{1}]$ and $[c_{2}]$ is not $0$. Without loss of generality we may assume that $[c_{1}]\not =0$, and we put $c=c_{1}$. Accordingly, we see that $i(a,c)=1$ if $c$ is two-sided and $i(a,c)=0$ if $c$ is one-sided. Moreover, we have $i(b,c)\leq i(a,b)-2$ in both cases where $c$ is two-sided and where $c$ is one-sided. By induction, we have
\begin{equation*}
d_{\mathcal{NC}'}(a,b)\leq d_{\mathcal{NC}'}(a,c)+d_{\mathcal{NC}'}(c,b)\leq 2+2i(c,b)+1\leq 2+2(i(a,b)-1)+1\leq 2i(a,b)+1.
\end{equation*}

Second, consider the case where the signs in  $\mathrm{nbd}(\beta)$ at the intersections $x$ and $y$ between $a$ and $b$ are different. Here, it also follows that $[a]=[c_{1}]+[c_{2}]$. Similarly, we may assume that $[c_{1}]\not =0$ and put $c=c_{1}$. Then, we see that $i(a,c)=0$ if $c$ is two-sided and $i(a,c)=1$ if $c$ is one-sided. Moreover, we have $i(b,c)\leq i(a,b)-2$, particularly $i(b,c)\leq i(a,b)-1$ in both cases where $c$ is two-sided and where $c$ is one-sided. By induction, we have
\begin{equation*}
d_{\mathcal{NC}'}(a,b)\leq d_{\mathcal{NC}'}(a,c)+d_{\mathcal{NC}'}(c,b)\leq 2+2i(c,b)+1\leq 2+2(i(a,b)-1)+1\leq 2i(a,b)+1.
\end{equation*}


\end{proof}

\begin{corollary}\label{cor_nonseparating_curve_graphs_are_quasi-isometric}
Let $N$ be a finite type nonorientable surface whose genus is at least $3$. Then, $\mathcal{NC}(N)$ and $\mathcal{NC}'(N)$ are quasi-isometric.
\end{corollary}

\begin{proof}[Proof of Corollary~\ref{cor_nonseparating_curve_graphs_are_quasi-isometric}]
We show the natural inclusion $\iota\colon\mathcal{NC}'(N)\rightarrow\mathcal{NC}(N)$, $\iota(a)=a$ is a quasi-isometry. Let $a$ and $b$ be any vertices in $\mathcal{NC}'(N)$.
Since $\mathcal{NC}'(N)\subset\mathcal{NC}(N)$, it follows that $d_{\mathcal{NC}}(\iota(a),\iota(b))\leq d_{\mathcal{NC'}}(a,b)$. We consider the opposite direction of the ineqality.
We assume that $d_{\mathcal{NC}}(\iota(a),\iota(b))=n$. Then there exsits a sequance $a=c_{0}, c_{1},\cdots,c_{n-1},c_{n}=b$ such that $i(c_{i},c_{i+1})\leq 2$ for any $i=0,1,\cdots,n-1$. By Lemma~\ref{nonseparating_curve_graphs_are_quasi-isometric}, we have
\begin{equation*}
d_{\mathcal{NC}'}(c_{i},c_{i+1})\leq 2i(c_{i},c_{i+1})+1\leq 5=5d_{\mathcal{NC}}(c_{i},c_{i+1}).
\end{equation*}
Thus we obtain
\begin{equation*}
d_{\mathcal{NC}'}(a,b)\leq 5d_{\mathcal{NC}}(\iota(a),\iota(b)).
\end{equation*}
Therefore we see that $\iota$ is a quasi-isometric embedding.

For any vertex $a\in\mathcal{NC}(N)$, we choose $a\in\mathcal{NC}'(N)$. Then we have 
\begin{equation*}
d_{\mathcal{NC}}(a,\iota(a))=d_{\mathcal{NC}}(a,a)=0.
\end{equation*}
Therefore we see that  $\iota$ is quasi-dense.
\end{proof}

For vertices $a$ and $b$ in $\mathcal{NC}(N)$, we define $A(a,b)$ to be the full subgraph of $\mathcal{NC}(N)$ spanned by the homotopy classes of the nonseparating bicorns between $a$ and $b$.

From Corollary~\ref{cor_nonseparating_curve_graphs_are_quasi-isometric} it is enough to prove the uniform hyperbolicity of $\mathcal{NC}(N)$ for Theorem~\ref{unif_hyp_of_nonseparating_curve_graph}. Through the following steps, we will check that $\mathcal{NC}(N)$ and the full subgraphs $A(a,b)$ of $\mathcal{NC}(N)$ satisfy the conditions of Proposition~\ref{criterion_for_hyperbolicity}:
\begin{itemize}
\item find a uniform bound on the diameter of $A(a,b)$ whose vertices $a$ and $b$ are connected by an edge (Lemma~\ref{bounded_diameter}), 
\item prove the connectedness of $A(a,b)$ (Lemma~\ref{connectedness_bicorn_graph}),
\item show the slim triangle properties (Lemma~\ref{slim_triangle_property}).
\end{itemize}

First, we prove Lemma~\ref{bounded_diameter}.
\begin{lemma}{\rm(cf. }\cite[Proposition 3.2]{Rasmussen20}{\rm)}\label{bounded_diameter} 
Let $a$ and $b$ be any pair of vertices of $\mathcal{NC}(N)$ that are connected by an edge. Then, $A(a,b)$ has a diameter at most two. 
\end{lemma}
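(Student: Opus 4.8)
The plan is to use the vertex $a$ itself as a common center for the subgraph $A(a,b)$. Since $a$ and $b$ are joined by an edge of $\mathcal{NC}(N)$, I would first fix representatives in minimal position, so that $|a\cap b|=i(a,b)\le 2$; note that $a$ is itself a nonseparating bicorn between $a$ and $b$, hence a vertex of $A(a,b)$. The heart of the argument will be the estimate that every bicorn $c$ between $a$ and $b$ satisfies $i(a,c)\le 2$, which immediately puts every vertex of $A(a,b)$ within distance one of $a$ in $\mathcal{NC}(N)$.

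To prove that estimate, observe that if $c\in\{a,b\}$ there is nothing to do ($i(a,a)=0$ and $i(a,b)\le 2$). Otherwise $c=\alpha\cup\beta$ is a genuine bicorn with two distinct corners $x,y$, which already forces $i(a,b)=2$ and $\{x,y\}=a\cap b$. The key observation is then that the $b$-arc $\beta$ meets $a$ only at its endpoints $x$ and $y$, because these already account for all of $a\cap b$; in particular the interior of $\beta$ is disjoint from $a$. The $a$-arc $\alpha$ lies on $a$, so pushing $\alpha$ slightly off $a$ to one fixed side along its whole length yields an arc $\alpha'$ disjoint from $a$ and a representative $\alpha'\cup\beta$ of $c$ with the two corners rounded off. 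Since $\alpha'$ and the interior of $\beta$ are both disjoint from $a$, this representative can meet $a$ only in small neighborhoods of the two corners, and rounding a corner produces at most one transverse intersection with $a$ there; hence $i(a,c)\le 2$. This local picture at the corners is the only step requiring care, and it is precisely where one should check that nonorientability causes no trouble: it does not, since the analysis is entirely local around intersection points and never uses an orientation.

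With this in hand the lemma follows formally. Every vertex $c$ of $A(a,b)$ is a nonseparating bicorn between $a$ and $b$, so $i(a,c)\le 2$, and since $a$ and $c$ are both nonseparating, the definition of $\mathcal{NC}(N)$ gives that $c=a$ or $c$ and $a$ are joined by an edge; as $A(a,b)$ is a full subgraph, that edge lies in $A(a,b)$. Consequently, for any two vertices $c,c'$ of $A(a,b)$,
\[
d_{\mathcal{NC}}(c,c')\le d_{\mathcal{NC}}(c,a)+d_{\mathcal{NC}}(a,c')\le 2,
\]
so $A(a,b)$ has diameter at most two, whether measured in $\mathcal{NC}(N)$ or intrinsically in $A(a,b)$. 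The degenerate case $i(a,b)\le 1$, in which there are no genuine bicorns at all and $A(a,b)=\{a,b\}$ has diameter at most one, is subsumed by this argument. The main obstacle is thus the corner estimate of the second paragraph; everything surrounding it is bookkeeping.
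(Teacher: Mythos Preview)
Your proof is correct and follows the same overall strategy as the paper: show that every nonseparating bicorn $c$ between $a$ and $b$ is adjacent to $a$ in $\mathcal{NC}(N)$, and then conclude by the triangle inequality. The only difference is that the paper carries out a case analysis (on the signs of the two intersection points and on which of $\alpha,\beta$ is twisted) to obtain the sharper bound $i(a,c)\le 1$, whereas your push-off argument yields $i(a,c)\le 2$; since edges of $\mathcal{NC}(N)$ are defined by intersection number at most two, your weaker bound is already enough.
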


\begin{proof}[Proof of Lemma~\ref{bounded_diameter}]
If $i(a,b)\leq 1$, then $A(a,b)$ consists of two vertices $a$, $b$, and exactly one edge connects $a$ and $b$, so we are done.

If $i(a,b)=2$, we fix an orientation of $b$. We show that any vertex $c\not =b$ of $A(a,b)$ satisfies $i(a,c)\leq 1$. Take any bicorn $c\not= b$ between $a$ and $b$, and put $c=\alpha\cup\beta$, where $\alpha$ is the $a$-arc and $\beta$ is the $b$-arc of $c$. According to the orientation of $b$, we assume that $\beta$ starts at $x\in a\cap b$ and ends at $y\in a\cap b$ (We note that now $a$ and $b$ satisfies $i(a,b)=2$, and so it follows that $a\cap b=\{x,y\}$). 

First, consider the case where the signs in $\mathrm{nbd}(\beta)$ at $x$ and $y$ between $a$ and $b$ are the same. We see that $i(a,c)=1$ if $c$ is two-sided, and $i(a,c)=0$ if $c$ is one-sided (see Figure~\ref{fig_intersection_number_between_bicorns_for_same_signs_v1}). Second, consider the case where the signs in $\mathrm{nbd}(\beta)$ at $x$ and $y$ between $a$ and $b$ are different. We see that $i(a,c)=0$ if $c$ is two-sided, and $i(a,c)=1$ if $c$ is one-sided (see Figure~\ref{fig_intersection_number_between_bicorns_for_opposite_signs_v1}), and we are done.

\begin{figure}[ht]
\includegraphics[scale=0.35]{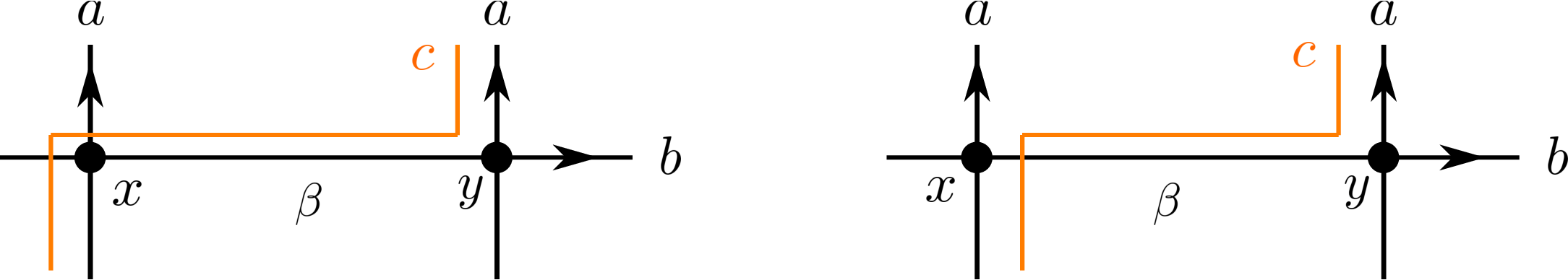}
\caption{Examples of intersections between $a$ and $c$ if $c$ is two-sided (left) and one-sided (right).}\label{fig_intersection_number_between_bicorns_for_same_signs_v1}
\end{figure}

\begin{figure}[ht]
\includegraphics[scale=0.35]{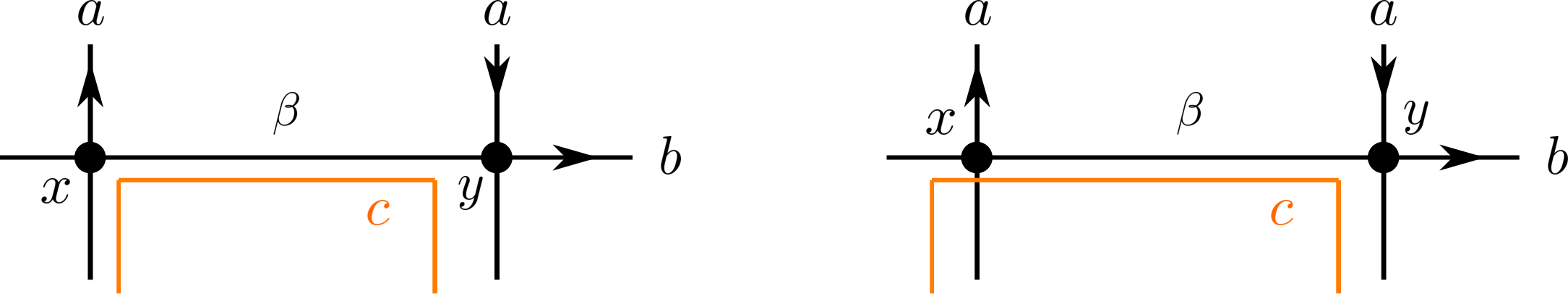}
\caption{Examples of intersections between $a$ and $c$ if $c$ is two-sided (left) and one-sided (right).}\label{fig_intersection_number_between_bicorns_for_opposite_signs_v1}
\end{figure}

\end{proof}

\begin{lemma}{\rm(cf. }\cite[Claim 3.4]{Rasmussen20}{\rm)}\label{connectedness_bicorn_graph} 
For any pair $a$ and $b$ of vertices in $\mathcal{NC}(N)$, the graph $A(a,b)$ is connected.
\end{lemma}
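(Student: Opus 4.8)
The plan is to carry over to nonorientable surfaces the argument that Rasmussen uses for orientable surfaces, which itself rests on the bicorn paths of Przytycki and Sisto. Fix nonseparating curves $a,b$ in minimal position; it is enough to show that every vertex $c$ of $A(a,b)$ — that is, every nonseparating bicorn between $a$ and $b$ — can be joined to the vertex $a$ by a path contained in $A(a,b)$. (When $i(a,b)\le 2$ this is already contained in Lemma~\ref{bounded_diameter}, so the content is the case $i(a,b)\ge 3$.)

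Write $c=\alpha\cup\beta$, with $a$-arc $\alpha$, $b$-arc $\beta$, and corners $x,y$. If $\beta$ meets $a$ only at $x$ and $y$, then homotoping $\alpha$ slightly off $a$ creates at most one new intersection with $a$ near each corner, so $i(c,a)\le 2$ and $c$ is joined to $a$ by an edge of $A(a,b)$. Otherwise $\beta$ meets $a$ at an interior point; let $z$ be the first such point met along $\beta$ from $x$, so that $\delta:=\overline{xz}_{\beta}$ meets $a$ exactly in $\{x,z\}$. Since $\delta$ then meets each of the two subarcs of $a$ from $x$ to $z$ only at the shared endpoints, those two subarcs yield two embedded bicorns $\gamma',\gamma''$ between $a$ and $b$, each with $b$-arc $\delta$. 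Over $\mathbb{Z}_{2}$ one has $[\gamma']+[\gamma'']=[a]\neq0$, so at least one of them, call it $c^{*}$, has nonzero class in $H_{1}(N,\partial N;\mathbb{Z}_{2})$; because a disk, a once-punctured disk, and a M\"obius band all have $\mathbb{Z}_{2}$-trivial boundary, and a boundary-parallel curve is null-homologous in $H_{1}(N,\partial N;\mathbb{Z}_{2})$, such a curve is essential and nonseparating, hence a vertex of $A(a,b)$. Its $b$-arc $\delta$ meets $a$ only at the corners, so $c^{*}$ is joined to $a$ by an edge exactly as in the first case. What remains is to join $c$ to $c^{*}$ inside $A(a,b)$.

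The heart of the matter is this last distance estimate: that $c^{*}$, chosen appropriately among the two candidates — and, if a single surgery is not enough, reached through a short bicorn path between $c$ and $a$ in the sense of Przytycki and Sisto — can be joined to $c$ by edges of $A(a,b)$, that is, by steps between curves that are realizable with at most two intersections. After homotoping off the shared subarc $\delta$, the intersections of $c$ with the surgered curve are confined to small neighborhoods of $x$, of $z$, and of the points of $\beta\cap a$, and are bounded by a finite case analysis. Since a nonorientable surface carries no coherent notion of "side", the local configurations at the two corners $x$ and $z$ are coupled through the twistedness of $\delta$ (and of the chosen subarc of $a$), and I would run the case analysis with a twisted/untwisted split, exactly mirroring the way the orientation and sign bookkeeping of the orientable arguments was handled in the proofs of Lemmas~\ref{nonseparating_curve_graphs_are_quasi-isometric} and~\ref{bounded_diameter}. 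The obstacle I anticipate is carrying this out while respecting the homological constraint from the previous paragraph — producing a surgery, or a bicorn path, that is simultaneously nonseparating at every stage and of jump length at most two — since reconciling these two requirements is where the argument is most delicate.
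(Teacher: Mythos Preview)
Your outline stops at exactly the point where the argument needs an idea. You produce a bicorn $c^{*}$ adjacent to $a$, but connecting $c$ to $c^{*}$ is not a finite case analysis: the portion $\beta\setminus\delta$ of the $b$-arc of $c$ can cross the $a$-arc of $c^{*}$ arbitrarily many times, so $i(c,c^{*})$ is unbounded in general. Invoking a ``bicorn path in the sense of Przytycki--Sisto'' does not close the gap by itself, because the intermediate bicorns on such a path need not be nonseparating, and you have not said how to repair this; you yourself flag this as the obstacle you anticipate, and it is indeed the whole content of the lemma.

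The paper avoids this jump by running an induction in the opposite direction. Partially order the vertices of $A(a,b)$ by inclusion of their $b$-arcs and show that every $c\neq b$ admits a successor $c'>c$ in $A(a,b)$ with $i(c,c')\le 2$; iterating reaches $b$. Concretely, extend $\beta$ along $b$ past both corners until it first meets the interior of $\alpha$, at points $x',y'$. The two natural bicorns $c_{1},c_{2}$ with corners $x',y'$ each satisfy $i(c,c_{i})\le 1$ by a local twisted/untwisted check of the kind you describe. If either is nonseparating, take it as $c'$. The case you correctly anticipated as delicate is when \emph{both} $c_{1}$ and $c_{2}$ are separating: here the paper constructs an auxiliary bicorn $e_{2}$ with $[c_{2}]+[e_{2}]=[c]$ (so $e_{2}$ is nonseparating), and then a bicorn $c'$ with corners $x',y'$ satisfying $[c']=[c_{1}]+[e_{2}]\neq 0$; a bigon argument at the pairs $\{x,y\}$ and $\{x',y'\}$ gives $i(c,c')\le 2$. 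This extra homological step --- manufacturing a nonseparating bicorn with the prescribed enlarged $b$-arc when the obvious candidates both fail --- is exactly the missing ingredient in your outline.
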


\begin{proof}[Proof of Lemma~\ref{connectedness_bicorn_graph}]
If $i(a,b)\leq 1$, then $A(a,b)$ consists of exactly two vertices $a$ and $b$ and a single edge connects $a$ and $b$, and hence $A(a,b)$ is connected. So, we assume that $i(a,b)\geq 2$.

We define a partial order on the vertices of $A(a,b)$ as follows: $c<c'$ if the $b$-arc of $c'$ properly contains the $b$-arc of $c$. It is enough to prove that given any $c\in A(a,b)$, if $c\not=b$, we can find $c'\in A(a,b)$ so that $c<c'$ and $i(c,c')\leq 2$.

We fix an orientation of $b$. First, consider the case where $c=a$. Choose an intersection point $x\in a\cap b$. Let $y$ be the first point of $a\cap b$ after $x$ along $b$ according to the orientation of $b$. Put $\beta=\overline{xy}_{b}\subset b$ so that the interior of $\beta$ is disjoint from $a$. The points $x$ and $y$ bound the two subarcs $\alpha$ and $\alpha'$ of $a$ with disjoint interiors such that $a=\alpha\cup\alpha'$. Accordingly, $c_{1}'=\alpha\cup\beta$ and $c_{2}'=\alpha'\cup\beta$ are bicorns between $a$ and $b$. For any $i=1,2$, we see that $i(c_{i}', c)\leq 1$. In Fact, first consider the case where the signs in  $\mathrm{nbd}(\beta)$ at the intersection points $x$ and $y$ between $a$ and $b$ are the same. Then, we see that $i(c_{i}', c)=1$ if $c_{i}'$ is two-sided, and $i(c_{i}', c)=0$ if $c_{i}'$ is one-sided. Second consider the case where the signs in  $\mathrm{nbd}(\beta)$ at the intersection points $x$ and $y$ between $a$ and $b$ are different. we see that $i(c_{i}', c)=0$ if $c_{i}'$ is two-sided, and $i(c_{i}', c)=1$ if $c_{i}'$ is one-sided. Moreover, we have $[a]=[c_{1}']+[c_{2}']$. Since $a$ is nonseparating, at least one of $c_{1}$ and $c_{2}$ is nonseparating. Without loss of generality we may assume that $[c_{1}']\not=0$ and put $c'=c_{1}'\in A(a,b)$. Then, it follows that $c<c'$, and $c$ and $c'$ are joined by an edge in $A(a,b)$.

Second, consider the case where $c\not=a$. We put $c=\alpha\cup\beta$, where $\alpha$ is a subarc of $a$ and $\beta$ is a subarc of $b$, and we assume that $\beta$ starts at $x$ and ends at $y$ according to the orientation of $b$. In this case, we extend $\beta$ past both of its endpoints until it intersects the interior of $\alpha$ for the first time on each side and name the intersection points $x'$ and $y'$, respectively. We note that when we extends $\beta$ past $x$ along $b$ we consider the opposite orientation of $b$. In the case where the extended b-arc does not intersect with $\alpha$ any more, we find that $i(c,b)\leq 1$ and we are done. In the case where $x'=y'$, we see that $i(c,b)\leq 2$ and we are also done. 

Therefore, we may assume that $x'\not=y'$. Let $c_{1}$ be a bicorn between $a$ and $b$ whose coners are $x, y'$, and $c_{2}$ a bicorn between $a$ and $b$ whose coners are $x', y$. We claim that for any $i=1,2$, we have $i(c,c_{i})\leq 1$. We prove this claim from now on. Without loss of generality we put $c'=c_{1}=\alpha'\cup\beta'$, where $\alpha'$ is a subarc of $a$ and $\beta'$ is a subarc of $b$, and we put $z=y'$. We note that $\alpha'\subset\alpha$ and $\beta\subset\beta'$, hence $c<c'$, and $\beta'$ intersects with $\alpha$ exactly once, that is at $z$, in its interior. We will prove the claim by examining the cases below (8 cases).
\begin{itemize}
\item the signs in $\mathrm{nbd}(\alpha)$ of the intersection points $y, z$ between $a$ and $b$ are the same or different,
\item the signs in $\mathrm{nbd}(\alpha)$ of the intersection points $x, y$ between $a$ and $b$ are the same or different, and
\item $c$ is two-sided or one-sided.
\end{itemize}

First we consider the case where the signs in $\mathrm{nbd}(\alpha)$ of the intersection points $y, z$ between $a$ and $b$ are the same, and the signs in $\mathrm{nbd}(\alpha)$ of the intersection points $x, y$ between $a$ and $b$ are the same. Then we see that $i(c,c')=1$ if $c$ is two-sided and $i(c,c')=0$ if $c$ is one-sided (Figure~\ref{fig_case1_v2}). We will proceed with this discussion in all cases. When the signs in $\mathrm{nbd}(\alpha)$ of the intersection points $y, z$ between $a$ and $b$ are the same and the signs in $\mathrm{nbd}(\alpha)$ of the intersection points $x, y$ between $a$ and $b$ are different, we see that $i(c,c')=1$ if $c$ is two-sided, and $i(c,c')=0$ if $c$ is one-sided. When the signs in $\mathrm{nbd}(\alpha)$ of the intersection points $y, z$ between $a$ and $b$ are different and the signs in $\mathrm{nbd}(\alpha)$ of the intersection points $x, y$ between $a$ and $b$ are the same, we see that $i(c,c')=0$ if $c$ is two-sided, and $i(c,c')=1$ if $c$ is one-sided. When the signs in $\mathrm{nbd}(\alpha)$ of the intersection points $y, z$ between $a$ and $b$ are different and the signs in $\mathrm{nbd}(\alpha)$ of the intersection points $x, y$ between $a$ and $b$ are different, we see that $i(c,c')=0$ if $c$ is two-sided, and $i(c,c')=1$ if $c$ is one-sided. Hence, we conclude that $i(c,c')\leq 1$. (We note that these cases can be rephrased in this order if the signs in $\mathrm{nbd}(\alpha)$ of the intersection points $x$, $y$, and $z$ are the same, if the sign in $\mathrm{nbd}(\alpha)$ of $x$ is different from those of $y$ and $z$, if the sign in $\mathrm{nbd}(\alpha)$ of $z$ is different from those of $x$ and $y$, and if the sign in $\mathrm{nbd}(\alpha)$ of $y$ is different from those of $x$ and $z$, respecvively.)

\begin{figure}[ht]
\includegraphics[scale=0.35]{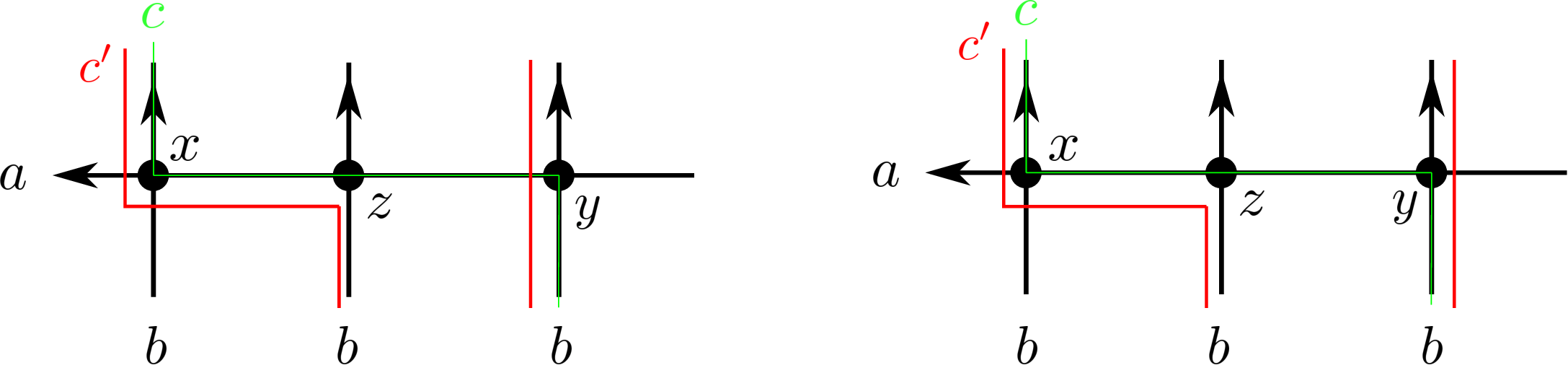}
\caption{Intersections between $c$ and $c'$ when the signs in $\mathrm{nbd}(\alpha)$ of the intersection points $y, z$ between $a$ and $b$ are the same and the signs in $\mathrm{nbd}(\alpha)$ of the intersection points $x, y$ between $a$ and $b$ are the same, and $c$ is two-sided (left-hand side) and one-sided (right-hand side).}\label{fig_case1_v2}
\end{figure}

If $c_{i}$ for some $i$ is nonseparating, we may take $c'=c_{i}$. Then $c'$ is a nonseparating curve which is adjacent to $c$ in $A(a,b)$ and satisfies $c<c'$.

Hence, we assume that both $c_{1}$ and $c_{2}$ are separating. In this case, there exists a bicorn $e_{2}$ between $a$ and $b$ such that $[c_{2}]+[e_{2}]=[c]$ when all three are appropriately oriented. Figure~\ref{fig_example_of_e2_v3} illustrates an example how to take $e_{2}$. 
Since $c_{2}$ is separating and $c$ is nonseparating, the curve $e_{2}$ is nonseparating. We define a bicorn $c'$ between $a$ and $b$ with corners $x'$ and $y'$ such that $[c']=[c_{1}]+[e_{2}]$. Then $[c']\not=0$ and $c'$ satisfies $c<c'$. We see $i(c,c')\leq 2$ by similar argument as before drawn in Figures~\ref{fig_intersection_number_between_bicorns_for_same_signs_v1}, \ref{fig_intersection_number_between_bicorns_for_opposite_signs_v1}, and \ref{fig_case1_v2}.
\end{proof}

\begin{figure}[ht]
\includegraphics[scale=0.35]{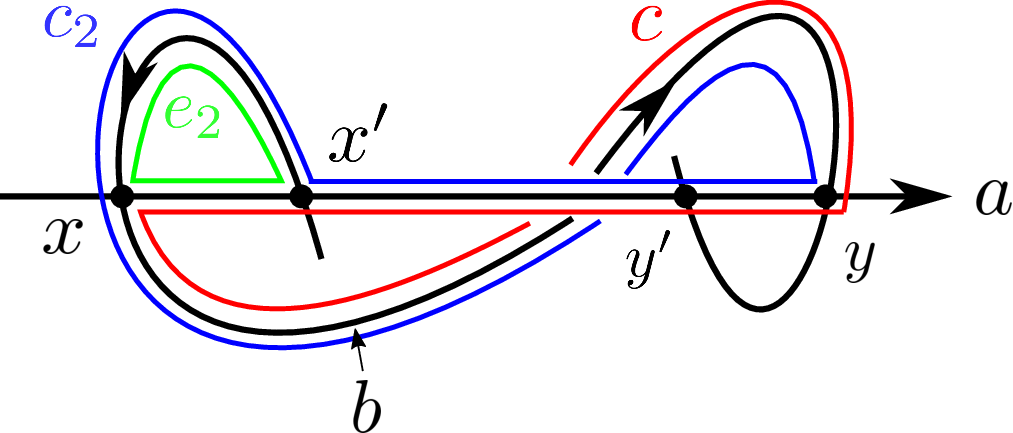}
\caption{A bicorn $e_{2}$ between $a$ and $b$.}\label{fig_example_of_e2_v3}
\end{figure}

\begin{remark}\label{difference_between_orientable_and_nonorientable_in_claim}
We note a difference from the cace of orientable surfaces (\cite{Rasmussen20}) in the proof of Lemma~\ref{connectedness_bicorn_graph}. For the orientable surfaces, if the signs of the intersection points $y, z$ between $a$ and $b$ are the same, then we can conclude that the geometric intersection number of $c$ and $c'$ is exactly $1$, that is, $i(c,c')=1$. And this implies that $c'$ is a nonseparating curve. 
On the other hand for nonorientable surfaces, even the case where the signs of the intersection points $y, z$ between $a$ and $b$ are the same, we have both $i(c,c')=0$ and $i(c,c')=1$, and we cannnot conclude that $c'$ is nonseparating. And so, we treat the both cases where the signs of the intersection points $y, z$ between $a$ and $b$ are the same and different at the same time in our proof.
\end{remark}

\begin{lemma}{\rm(cf. }\cite[Claim 3.5]{Rasmussen20}{\rm)}\label{slim_triangle_property}
There exists a uniform constant $D>0$ such that for any triple $a$, $b$, $d$ of vertices in $\mathcal{NC}(N)$, the graph $A(a,b)$ is contained in the $D$-neighborhood of $A(a,d)\cup A(b,d)$.
\end{lemma}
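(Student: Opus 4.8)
The plan is to verify the hypotheses of Proposition~\ref{criterion_for_hyperbolicity} for $X=\mathcal{NC}(N)$ with the family $\{A(a,b)\}$. Condition~(i) and the connectedness of each $A(a,b)$ are already Lemmas~\ref{bounded_diameter} and~\ref{connectedness_bicorn_graph}, so the only thing missing is Lemma~\ref{slim_triangle_property}, which is exactly condition~(ii) once the triple is relabeled so that $d$ is the middle vertex. Concretely, I will show that \emph{each} vertex $c\in A(a,b)$ lies within a universal distance $D$ of $A(a,d)\cup A(b,d)$. First I would dispose of the trivial cases: if $i(a,b)\le 1$ then $A(a,b)=\{a,b\}$ and $a\in A(a,d)$, $b\in A(b,d)$; and if $i(c,d)\le 2$ then $c$ equals or is adjacent in $\mathcal{NC}(N)$ to the vertex $d\in A(a,d)$. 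So from now on assume $i(c,d)\ge 3$, and write $c=\alpha\cup\beta$ with $a$-arc $\alpha$ and $b$-arc $\beta$, after a small isotopy placing $d$ in minimal position with $c$ and off the two corners of $c$.

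The heart of the argument is to cut $d$ along $c$. Interchanging the roles of $a$ and $b$ if necessary, we may assume $\#(d\cap\beta)\ge\#(d\cap\alpha)$; since $i(c,d)=\#(d\cap\alpha)+\#(d\cap\beta)\ge 3$ this forces $\#(d\cap\beta)\ge 2$. The points of $d\cap\beta$ cut $d$ into $\#(d\cap\beta)$ arcs whose interiors are disjoint from $\beta$ and which together contain all of the (at most $\#(d\cap\beta)$) points of $d\cap\alpha$ in their interiors; by the pigeonhole principle, one such arc $\gamma$, with endpoints $z_1,z_2\in\beta\subset b$, has interior meeting $\alpha$ in at most one point. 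Let $\sigma\subset\beta$ be the subarc between $z_1$ and $z_2$; since $\mathrm{int}(\gamma)\cap\beta=\emptyset$, the curve $e:=\sigma\cup\gamma$ is an embedded bicorn between $b$ and $d$ with corners $z_1,z_2\in b\cap d$. Pushing $\sigma$ off $\beta$ and resolving the two corners --- a local computation of exactly the type carried out (with figures) for Lemmas~\ref{nonseparating_curve_graphs_are_quasi-isometric}, \ref{bounded_diameter} and~\ref{connectedness_bicorn_graph}, according to which of the arcs involved are twisted and what the signs at $z_1,z_2$ are --- shows $i(c,e)\le 3$, so $d_{\mathcal{NC}}(c,e)\le 7$ by Lemma~\ref{nonseparating_curve_graphs_are_quasi-isometric}.

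If $e$ is nonseparating then $e\in A(b,d)$ and we are done with $D=7$. If $e$ is separating, i.e.\ $[e]=0$ in $H_1(N,\partial N;\mathbb{Z}_2)$, I would invoke the homology-correction technique of Lemma~\ref{connectedness_bicorn_graph}: modify $e$ by surgeries along short subarcs of $\alpha$ and $\beta$ to produce auxiliary bicorns between $b$ and $d$ that remain close to $c$, and combine their $\mathbb{Z}_2$-classes with $[e]$ (orienting the pieces so that subarcs cancel) to reach a nonzero class; since $c$ is nonseparating, exactly as in Lemma~\ref{connectedness_bicorn_graph} one of the resulting summands must be nonseparating, yielding a nonseparating bicorn $e'$ between $b$ and $d$. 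As every bicorn used is assembled from $\sigma$, $\gamma$ and a bounded portion of $\alpha\cup\beta$, one still has $i(c,e')\le C$ for a universal constant $C$, hence $d_{\mathcal{NC}}\bigl(c,\,A(a,d)\cup A(b,d)\bigr)\le 2C+1=:D$. Feeding the three lemmas into Proposition~\ref{criterion_for_hyperbolicity} then completes the proof of Theorem~\ref{unif_hyp_of_nonseparating_curve_graph}.

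I expect the homology correction of the previous paragraph to be the main obstacle: as in Lemma~\ref{connectedness_bicorn_graph}, forcing the nearby bicorn to be nonseparating requires choosing the right combination of subarcs and then checking, by a case analysis on which arcs are twisted or untwisted and on the signs of the relevant intersection points --- and, in the nonorientable setting, on the parity with which each arc runs through the crosscaps --- that the surgered curve still meets $c$ a bounded number of times and that one summand is genuinely nonseparating. By contrast, the potentially worrying ``alternating'' configuration, in which the points of $d\cap c$ alternate between $\alpha$ and $\beta$ as one traverses $d$, is already absorbed by the pigeonhole choice of $\gamma$ above, and so is not a real difficulty.
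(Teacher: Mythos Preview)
Your pigeonhole step---choosing a $d$-arc $\gamma$ between consecutive points of $d\cap\beta$ that meets $\alpha$ at most once---is sound, and the bound $i(c,e)\le 3$ for the resulting bicorn $e=\sigma\cup\gamma$ is correct. The genuine gap, which you yourself flag, is the ``homology correction'' when $e$ is separating. This is not a bookkeeping detail but the crux of the lemma, and the appeal to Lemma~\ref{connectedness_bicorn_graph} does not transfer as stated. In that lemma the correction succeeded because one had an identity like $[c_2]+[e_2]=[c]$ tying the auxiliary bicorns directly to the given nonseparating class $[c]$; here the bicorns you need lie in $A(b,d)$ and are built only from $b$-arcs and $d$-arcs, so $[c]$ (which involves the $a$-arc $\alpha$) has no a~priori homological relation to them. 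Your sentence about ``surgeries along short subarcs of $\alpha$'' is symptomatic: a subarc of $\alpha$ cannot appear in a bicorn between $b$ and $d$. The identity that \emph{is} available is $[d]=\sum_j[c_j']$, summing over \emph{all} the $d$-arcs cut out by $d\cap\beta$; this does force some $c_j'$ to be nonseparating, but that $c_j'$ need not be your pigeonhole-selected $e$, and its $d$-arc may hit $\alpha$ many times, so the intersection bound with $c$ is lost. The obvious alternative fix---replacing $\sigma$ by the complementary arc $b\setminus\sigma$, so that the two bicorns sum to $[b]\ne 0$---also fails, since $b\setminus\sigma$ leaves $\beta$ and can cross $\alpha$ arbitrarily often.

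The paper resolves exactly this tension with a two-stage construction that your outline does not contain. First one accepts the homologically guaranteed nonseparating $c'\in A(b,d)$ from $[d]=\sum[c_j']$, without controlling $|c'\cap\alpha|$. Then one cuts $c'$ along $\alpha$, producing bicorns $c_i''$ between $a$ and $d$, each with $i(c,c_i'')\le 1$; if any is nonseparating, done. The hard case is when \emph{all} $c_i''$ are separating. One then observes that the $d$-arcs $\gamma_i$ of the $c_i''$ must join one side of $\alpha$ to the same side (else $i(c_i'',c)=1$ would make $c_i''$ nonseparating), and that the maximal $a$-subarcs among the $\alpha_i$ are pairwise disjoint. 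Replacing those maximal $\alpha_i'$ inside $c$ by the corresponding $\gamma_i'$ produces a curve $c_0$ (not itself a bicorn) with $[c_0]=[c]+\sum[c_i''']=[c]\ne 0$, and a bigon check gives $i(c,c_0)\le 1$ and $i(c_0,c')\le 3$. This $c_0$ is the bridge from $c$ to $A(b,d)$ at uniformly bounded distance; without it, the separating case in your argument remains open.
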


\begin{proof}[Proof of Lemma~\ref{slim_triangle_property}]
Let $c\in A(a,b)$ be any nonseparating bicorn between $a$ and $b$ and put $c=\alpha\cup\beta$, where $\alpha$ is the $a$-arc of $c$ and $\beta$ is the $b$-arc of $c$. Let $s$ and $t$ be the corners of the bicorn $c$ between $a$ and $b$. If $d$ intersects both $\alpha$ and $\beta$ at most once, we have $i(c,d)\leq 2$ and hence $c$ is at distance 1 from $d\in A(a,d)$. Otherwise, $d$ intersects at least one of $\alpha$ and $\beta$ at least twice. We assume that $d$ intersects $\beta$ at least twice in this proof. We fix an orientation of $d$, and let $x_{1}, x_{2}, \cdots, x_{m}$ ($m\geq 2$) be the intersection of $d$ and $\beta$ along $d$ in this order according to the orientation of $d$. For any $i=1,\cdots, m$, let $\delta_{i}=\overline{x_{i}x_{i+1}}_{d}$ be the unique subarc of $d$ bounded by $x_{i}$ and $x_{i+1}$ whose orientation agrees with the orientation of $d$ (indices are taken modulo $m$). Also, let $\beta_{i}$ be the unique subarc of $\beta$ such that $c_{i}'=\beta_{i}\cup\delta_{i}$ is a bicorn between $b$ and $d$ (we orient $\beta$ from $x_{i+1}$ to $x_{i}$). If we give $c_{i}'$ the orientation induced by $\delta_{i}$, we have
\begin{equation*}
[d]=[c_{1}']+[c_{2}']+\cdots+[c_{m}'].
\end{equation*}
Since $[d]\not=0$, there is some $u$ such that $[c_{u}']\not=0$. Put $c'=c_{u}'$. We note that $c'\in A(b,d)$. Let $\beta'$ be the $b$-arc of $c'$. If $c'$ intersect with $\alpha$ at most once, then it follows that $i(c,c')\leq 2$, and $c$ is at distance 1 from $c'\in A(b,d)$ in $\mathcal{NC}(N)$.

If $c'$ intersects with $\alpha$ at least twice, we denote by $y_{1}, y_{2}, \cdots, y_{n}$ ($n\geq 2$) the points of $c'\cap\alpha$ appearing in this order along $c'$ (i.e. $d$) according to the orientation of $d$ from one of the endpoints of $\beta'$ (we refer \cite[Figure 7]{Rasmussen20} as a figure which describes this situation).
For any $i=1,\cdots,n-1$, the points $y_{i}$ and $y_{i+1}$ bound a unique arc $\alpha_{i}$ of $\alpha$ and a unique arc $\gamma_{i}$ of $c'$ not containing the $b$-arc $\beta'$. Then, $c_{i}'=\alpha_{i}\cup\gamma_{i}$ is a bicorn between $a$ and $d$. It follows that $i(c,c_{i}'')\leq1$. Actually, the interior of $\gamma_{i}$ does not intersect with $c$, and hence, the intersection occurs only on $\alpha_{i}$. 
Therefore, if there exists $c_{i}''$ ($i=1,2,\cdots,n-1$) which is nonseparating, then $c''=c_{i}''$ is an element of $A(a,d)$ and $d_{\mathcal{NC}}(c, c'')=1$.

Thus let us assume that all $c_{i}''$ are separating. In this case, we claim that $c$ is a uniformly bounded distance from $c'\in A(b,d)$. To show this, we orient the regular neighborhood of $\alpha$ (note that the regular neighborhood of $\alpha$ is homeomorphic to a disk.) and make the following two observations:
\begin{itemize}
\item[(I)] In the oriented regular neighborhood of $\alpha$, for each $i=1,\cdots, n-1$, $d$-arc $\gamma_{i}$ joins the left-hand side (resp.\ right-hand side) of $\alpha$ to the left-hand side (resp.\ right-hand side) of $\alpha$ following the orientation of the regular neighborhood. Otherwise, $c_{i}''$ intersects with $c$ exactly once, and it indicates that $c_{i}''$ is nonseparating.
\item[(I\hspace{-1pt}I)] For each $1\leq i<j\leq n-1$, if in the oriented regular neighborhood of $\alpha$ the $d$-arcs of $c_{i}''$ and $c_{j}''$, namely, $\gamma_{i}$ and $\gamma_{j}$ join the left-hand side of $\alpha$ to the left-hand side of $\alpha$ following the orientation of the regular neighborhood of $\alpha$, then the $a$-arcs of $c_{i}''$ and $c_{j}''$, namely, $\alpha_{i}$ and $\alpha_{j}$, are either nested or disjoint. In fact, we suppose that $\gamma_{i}$ has the endpoints $x$, $z$ and $\gamma_{j}$ has the endpoints $y$, $w$ with $x<y<z<w$ in the orientation of $\alpha$. Then the bicorns $c_{i}''$ and $c_{j}''$ have $i(c_{i}'',c_{j}'')=1$. This contradicts that $c_{i}''$ and $c_{j}''$ are separating. 
\end{itemize}

Now enumerate all of the $d$-arcs $\gamma_{j}$ which join the left-hand side of $\alpha$ to the left-hand side of $\alpha$ in the oriented regular neighborhood of $\alpha$ such that the corresponding $a$-arcs $\alpha_{j}$ are maximal with respect to the inclusion: $\gamma_{1}',\cdots,\gamma_{r}'$. Note that $\gamma_{i}'\in\{\gamma_{1},\cdots,\gamma_{n-1}\}$ for $i=1,\cdots, r$. Let $\alpha_{1}',\cdots,\alpha_{r}'$ be the corresponding $a$-arcs. For each $i=1,\cdots,r$, we put $c_{i}'''=\alpha_{i}'\cup\gamma_{i}'$. Note that $c_{i}'''\in\{c_{1}'', c_{2}'',\cdots,c_{n-1}''\}$ for $i=1,\cdots, r$, and $c_{i}'''$ is a bicorn between $a$ and $d$. For each $i=1,\cdots,r$, we replace the subarc $\alpha_{i}'$ of $c$ with $\gamma_{i}'$ and call the resulting curve $c_{0}$. We see that $[c_{0}]=[c]+[c_{1}''']+[c_{2}''']+\cdots+[c_{r}''']$ in $H_{1}(N,\partial N; \mathbb{Z}_{2})$. Since $[c_{i}''']=0$ for all $i=1,\cdots, r$, we have $[c_{0}]=[c]$. Thus, $c_{0}$ is nonseparating, that is, $c_{0}\in\mathcal{NC}(N)$. 

Finally, we claim that $i(c,c_{0})\leq 1$ and $i(c_{0}, c')\leq 3$. 
First, we show the first half. Recall that $c=\alpha\cup\beta$. Note that $\beta$ is properly contained in $c_{0}$. Then, by similar argument as before drawn in Figures~\ref{fig_intersection_number_between_bicorns_for_same_signs_v1}, \ref{fig_intersection_number_between_bicorns_for_opposite_signs_v1}, and \ref{fig_case1_v2}, we can show $i(c,c_{0})\leq 1$ (The intersection occurs only on $\beta$). Figure~\ref{fig_the_curve_c0_v2} is a possible intersection pattern of $c$ and $c_{0}$. (Note that in the case of orientable surfaces~\cite[Claim 3.5]{Rasmussen20}, the intersection number is exactly $i(c,c_{0})=0$.)

Now we show the second half. Recall that $c_{0}$ consists of $\beta$ and several $\alpha$-arcs and $d$-arcs. In particular, $\beta\subset c_{0}$. Moreover, $c'$ is a bicorn between $b$ and $d$ whose corners are $x_{u}$ and $x_{u+1}$. In particular, $x_{u},x_{u+1}\in\beta$. Then we see that the number of intersection points between $c_{0}$ and $c'$ on $\beta$ is at most one by similar argument as before drawn in Figures~\ref{fig_intersection_number_between_bicorns_for_same_signs_v1}, \ref{fig_intersection_number_between_bicorns_for_opposite_signs_v1}, and \ref{fig_case1_v2}. 
Any other intersection points of $c_{0}$ with $c'$ ocuur between $a$-arcs of $c_{0}$ and the $d$-arc of $c'$. By the definition of $c_{0}$, we see that such intersection points occur only on $y_{1}$ and $y_{n}$. This concludes that $i(c_{0},c')\leq 3$. Figure~\ref{fig_intersection_of_c0_and_c_prime_v3} is a possible intersection pattern of $c_{0}$ and $c'$.

\end{proof}

\begin{figure}[ht]
\includegraphics[scale=0.35]{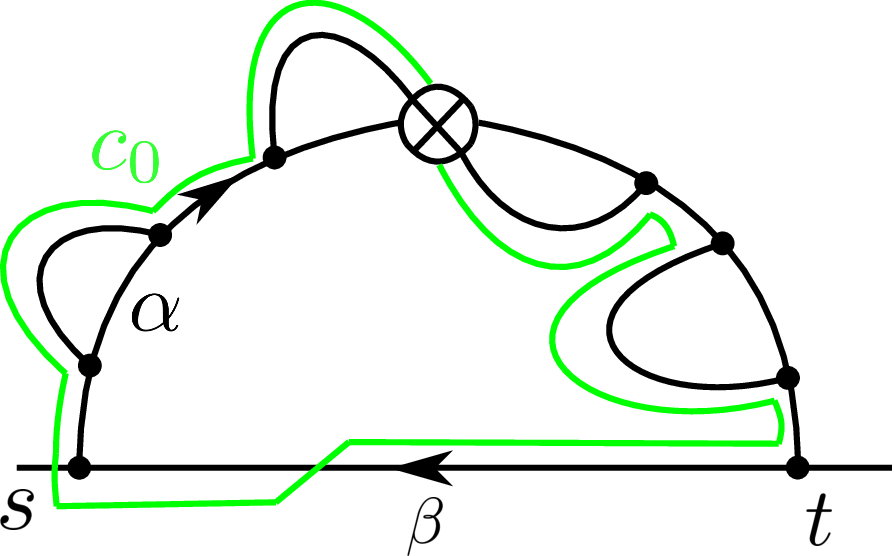}
\caption{A curve $c_{0}$ which intersects with $c$ once, where the antipodal points of a crosscap are identified.}\label{fig_the_curve_c0_v2}
\end{figure}

\begin{figure}[ht]
\includegraphics[scale=0.35]{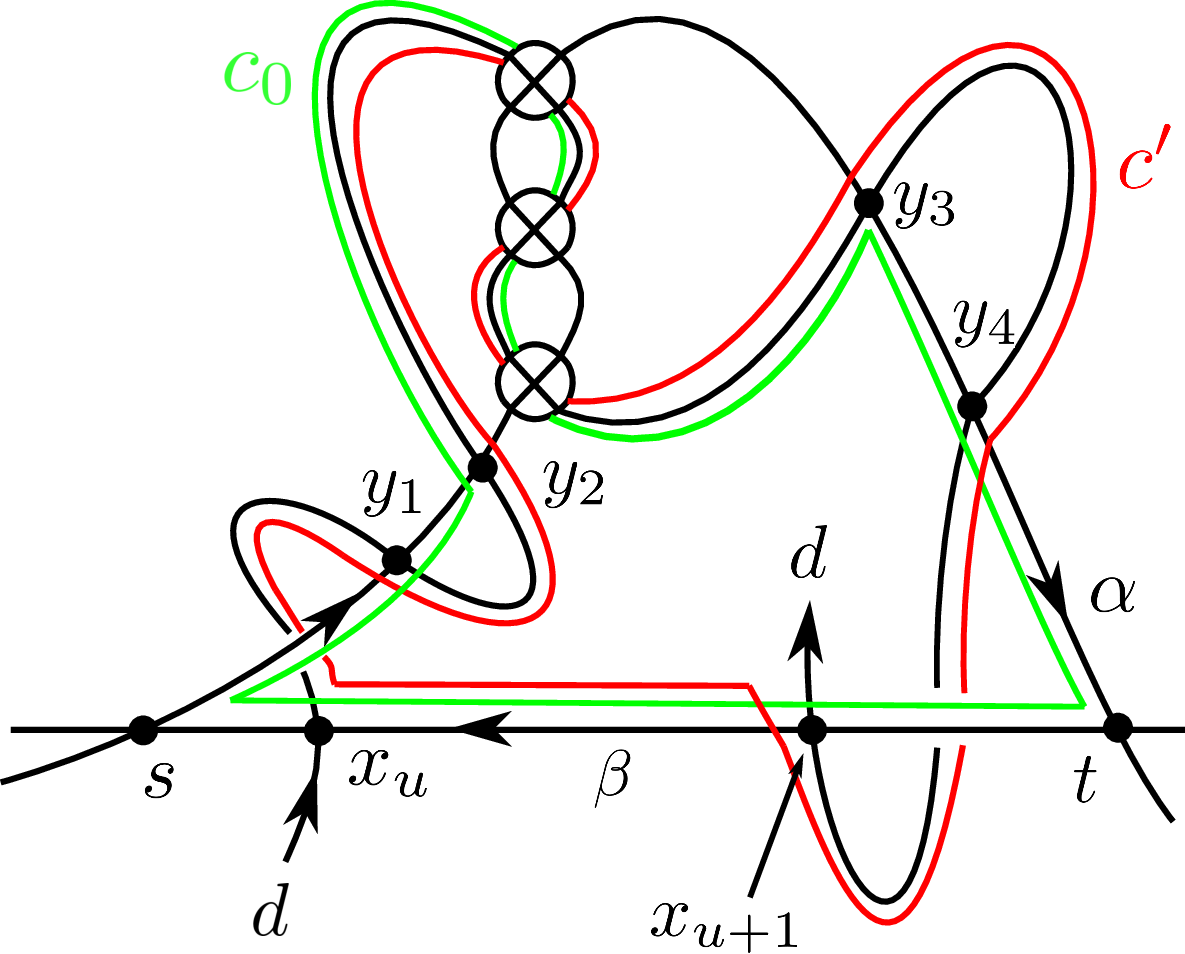}
\caption{Curves $c_{0}$ and $c'$ intersecting three times.}\label{fig_intersection_of_c0_and_c_prime_v3}
\end{figure}

\begin{remark}
We can take the uniform constant $D=10$ in Lemma~\ref{slim_triangle_property}. Indeed, let $c$, $c'$, and $c_{0}$ be the nonseparating curves in the proof of Lemma~\ref{slim_triangle_property}. At the last of the proof of Lemma~\ref{slim_triangle_property}, we showed that $i(c,c_{0})\leq 1$ and $i(c_{0}, c')\leq 3$. By Lemma~\ref{nonseparating_curve_graphs_are_quasi-isometric}, it follows that
\begin{align*}
  \begin{aligned}
  d_{\mathcal{NC}'}(c,c') & \leq
    d_{\mathcal{NC}'}(c,c_{0})+d_{\mathcal{NC}'}(c_{0},c') \\
   & \leq  2i(c,c_{0})+1+2i(c_{0},c')+1 \\
   & \leq 2+1+6+1 \\
   & \leq 10. \\
\end{aligned}
\end{align*}
Thus, we see that it is enough to take 10 as $D$ for the constant in Lemma~\ref{slim_triangle_property}.
\end{remark}

\section{The cases of genus 1 and 2}\label{The_cases_of_genus_1 and_2}


As mentioned in Remark~\ref{disconnectedness_nonseparating_curve_graphs_for_low_genus}, for nonorientable surfaces of genus 1 and 2, the nonseparating curve graphs $\mathcal{NC}'(N)$ are finite or not connected. In the cases of genus 1 and 2, we modify the definition of $\mathcal{NC}'(N)$ so that two vertices are joined by an edge if they are represented by curves which intersect at most once. Then we see that $\mathcal{NC}'(N)$ are connected (Proposition~\ref{connectedness_low_genus}). 
We also denote by $\mathcal{C}'(F)$ the curve graph whose vertises are the homotopy classes of essential simple closed curves, and two vertices are joined by an edge if they are represented by curves which intersect at most once.
In this section, we denote by $\mathcal{C}_{u}(F)$ and $\mathcal{NC}_{u}(F)$ the usual curve graph and nonseparating curve graph, respectively, that is, the vertices are homotopy classes of essential simple closed curves and nonseparating curves, respectively, and the edges are the pairs of vertices which can be realized disjointly. We also denote by $d_{\mathcal{C}'(F)}(\cdot,\cdot)$, $d_{\mathcal{NC}'(F)}(\cdot,\cdot)$, $d_{\mathcal{C}_{u}(F)}(\cdot,\cdot)$, and $d_{\mathcal{NC}_{u}(F)}(\cdot,\cdot)$ the distances of $\mathcal{C}'(F)$, $\mathcal{NC}'(F)$, $\mathcal{C}_{u}(F)$, and $\mathcal{NC}_{u}(F)$, respectively.

Firstly, we prove the connectedness of the nonseparating curve graphs $\mathcal{NC}'(N)$ of genus 1 and 2: 
\begin{proposition}\label{connectedness_low_genus}
Let $N=N_{g}^{f}$ be a nonorientable surface of $g=1$ and $f\geq 3$ or $g=2$ and $f\geq 1$. 
Then,  $\mathcal{NC}'(N)$ is connected.
\end{proposition}
\begin{proof}
At the first we consider nonorientable surfaces of genus $g=1$. For $f=0,1$, the nonseparating curve graph $\mathcal{NC}'(N)$ consists of exactly one isolated vertex.
For $f\geq 2$, each nonseparating curve on $N$ goes through the unique crosscap of $N$, and so for any pair of vertices $a,b\in\mathcal{NC}'(N)$ the geometric intersection number $i(a,b)$ is larger than 0. We prove by the induction on  $i(a,b)$.
When $i(a,b)=1$, then $a$ and $b$ are connected by an edge of $\mathcal{NC}'(N)$.
When $i(a,b)\geq 2$, we choose an orientation of $b$. We take an intersection point $x\in a\cap b$. Let $y$ be the first point of $a\cap b$ after $x$ along $b$ according to the orientation of $b$. Then, we can construct two curves $c_{1}$ and $c_{2}$ as shown in Figure~\ref{fig_how_to_make_c_1_and_c_2_v3} as the proof of Lemma~\ref{nonseparating_curve_graphs_are_quasi-isometric}. By the same argument as the proof of Lemma~\ref{nonseparating_curve_graphs_are_quasi-isometric}, we see that at least one of $c_{1}$ and $c_{2}$ is nonseparating, and denote it by $c$. Then it follows that $i(a,c)=1$ and $i(b,c)\leq i(a,b)-1$, and we can connect $a$ and $c$ by an edge and connect $c$ and $b$ by a path in $\mathcal{NC}'(N)$ by induction. Hence $a$ and $b$ are connected by a path in $\mathcal{NC}'(N)$.

Second, we consider nonorientable surfaces of genus $g=2$. For $f=0,1$, the nonseparating curve graph $\mathcal{NC}'(N)$ is connected by \cite[Section 2.4]{Atalan--Korkmaz14} and our definition of $\mathcal{NC}'(N)$. 
For $f\geq 2$, we can prove the nonseparating curve graph $\mathcal{NC}'(N)$ is connected by the same argument as the case of $g=1$ and $f\geq 2$ above.
\end{proof}


If $g=1$ and $f\leq 2$ or $g=2$ and $f=0$, $\mathcal{NC}'(N_{g}^{f})$ is a finite graph, and hence it is Gromov hyperbolic. If $g=1$ and $f\geq 3$ or $g=2$ and $f\geq 1$, $\mathcal{NC}'(N_{g}^{f})$ has infinite vertices and we see the following:

\begin{proposition}\label{infinite_diameter_low_genus}
Let $N=N_{g}^{f}$ be a nonorientable surface of $g=1$ or $g=2$. Then, $\mathcal{NC}'(N)$ has infinite diameter.
\end{proposition}

\begin{proof}
Let $S$ be the orientation double cover of $N$.
We note that the vertex sets of $\mathcal{C}_{u}(S)$ and $\mathcal{C}'(S)$ are the same.
Firstly we show that any pseudo-Anosov element $\varphi$ of $\mathrm{Mod}(S)$ acts on $\mathcal{C}'(S)$ loxodromically.
By Bowditch~\cite[Lemma 1.1]{Bowditch06} (also Schleimer~\cite[Lemma 1.21]{Schleimer06}), we see that for any vertices $a$ and $b$ of $\mathcal{C}_{u}(S)$, $d_{\mathcal{C}_{u}(S)}(a,b)\leq\log_{2}(i(a,b))+2$. 
Then since two vertices $a$ and $b$ of $\mathcal{C}'(S)$ are connected by an edge if $i(a,b)\leq 1$, we see that for any vertices $a$ and $b$ of $\mathcal{C}_{u}(S)$, $d_{\mathcal{C}_{u}(S)}(a,b)\leq 2d_{\mathcal{C}'(S)}(a,b)$. 
Let $\varphi$ be any pseudo-Anosov element of $\mathrm{Mod}(S)$. By Masur and Minsky~\cite{Masur--Minsky99}, $\varphi$ acts on $\mathcal{C}_{u}(S)$ loxodromically. Hence, for any vertex $c$ of $\mathcal{C}'(S)$, there exists a constant $\mathcal{E}>0$ such that for any $n\in\mathbb{Z}$, we have $\mathcal{E}|n|\leq d_{\mathcal{C}_{u}(S)}(c,\varphi^{n}(c))\leq d_{\mathcal{C}'(S)}(c,\varphi^{n}(c))$. Therefore any pseudo-Anosov element $\varphi$ of $\mathrm{Mod}(S)$ acts on $\mathcal{C}'(S)$ loxodromically.

From now on let $\varphi$ be any pseudo-Anosov element of $\mathrm{Mod}(N)$ and $c$ a vertex of $\mathcal{C}'(N)$. We fix any $n\in\mathbb{Z}$. By applying the same argument in the proof of Proposition~\ref{infinite_diameter} to $\mathcal{C}'(N)$ and $\mathcal{C}'(S)$, it follows that there exists a constant $\mathcal{E}>0$ such that for any $n\in\mathbb{Z}$, we have $\mathcal{E}|n|\leq d_{\mathcal{C}'(N)}(c,\varphi^{n}(c))$. We remark that if $\delta_{i}$ and $\delta_{i+1}$ are vertices of $\mathcal{C}'(N)$ which are connected by an edge in $\mathcal{C}'(N)$, then we have $i(\delta_{i},\delta_{i+1})\leq 1$. Let $\gamma^{i}$ and $\gamma^{i+1}$ be lifts of $\delta_{i}$ and $\delta_{i+1}$ to $S$, respectively. If $i(\delta_{i},\delta_{i+1})=0$, then $i(\gamma^{i},\gamma^{i+1})=0$, and if $i(\delta_{i},\delta_{i+1})=1$, then $i(\gamma^{i},\gamma^{i+1})\leq 1$. Hence $\gamma^{i}$ and $\gamma^{i+1}$ are also connected by an edge in $\mathcal{C}'(S)$, and we can use the same argument in the proof of Proposition~\ref{infinite_diameter}.
Since the nonseparating curve graph $\mathcal{NC}'(N)$ is a full subgraph of the curve graph $\mathcal{C}'(N)$, for any $c\in\mathcal{NC}(N)$, there exists $\mathcal{E}>0$ such that for any $n\in\mathbb{Z}$,
\begin{equation*}
\mathcal{E}|n|\leq d_{\mathcal{C}'(N)}(c,\varphi^{n}(c))\leq d_{\mathcal{NC}'(N)}(c,\varphi^{n}(c)),
\end{equation*}
and we see that the nonseparating curve graph $\mathcal{NC}'(N)$ has infinite diameter.


\end{proof}

Moreover, we obtain a modification of Lemma~\ref{nonseparating_curve_graphs_are_quasi-isometric} as below:

\begin{lemma}\label{augmented_nonseparating_curve_graphs_are_quasi-isometric} 
Let $N=N_{g}^{f}$ be a nonorientable surface of $g=1$ and $f\geq 3$ or $g=2$ and $f\geq 1$. 
Let $a$ and $b$ be any pair of vertices of $\mathcal{NC}'(N)$. Then, we have $d_{\mathcal{NC}'}(a,b)\leq 2i(a,b)+1$.
\end{lemma}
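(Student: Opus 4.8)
The plan is to run the same induction on the geometric intersection number $i(a,b)$ as in the proof of Lemma~\ref{nonseparating_curve_graphs_are_quasi-isometric}, reusing verbatim the surgery that produced the curves $c_{1}$ and $c_{2}$ there. The one difference is at the base of the induction: in the modified graph $\mathcal{NC}'(N)$, where an edge joins two vertices whenever the curves can be made to intersect at most once, the base case is immediate and the hypothesis $g\geq 3$ is no longer needed for it.

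First I would dispose of the base cases $i(a,b)\leq 1$. By the modified definition of $\mathcal{NC}'(N)$, the curves $a$ and $b$ are then either equal or joined by an edge, so $d_{\mathcal{NC}'}(a,b)\leq 1\leq 2i(a,b)+1$. This is exactly the step where the original proof invoked a positive-genus component of the complement of $\mathrm{nbd}(a\cup b)$ to produce a curve disjoint from both $a$ and $b$; that device is unavailable when $g\leq 2$, and circumventing it is the whole point of enlarging the edge relation.

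Next, for $i(a,b)\geq 2$, I would put $a$ and $b$ in minimal position, orient $a$, pick two consecutive points $x,y$ of $a\cap b$ along $b$, and form the curves $c_{1},c_{2}$ exactly as in Figure~\ref{fig_how_to_make_c_1_and_c_2}, according to whether the signs of $a\cap b$ at $x$ and $y$ agree. As in the proof of Lemma~\ref{nonseparating_curve_graphs_are_quasi-isometric}, one has $[a]=[c_{1}]+[c_{2}]$ in $H_{1}(N,\partial N;\mathbb{Z}_{2})$, so $[a]\neq 0$ forces $[c_{i}]\neq 0$ for some $i$; put $c\coloneqq c_{i}$. Since a $\mathbb{Z}_{2}$-homologically nontrivial curve cannot bound a disk, a disk with one marked point, or a M\"obius band, $c$ is essential, and being nonzero in homology it is nonseparating, hence a genuine vertex of $\mathcal{NC}'(N)$. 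The twistedness case analysis of that proof (the same figures apply) yields $i(a,c)\leq 1$ and $i(b,c)\leq i(a,b)-1$, so $a$ is adjacent to $c$ in the modified $\mathcal{NC}'(N)$, and the induction hypothesis gives $d_{\mathcal{NC}'}(c,b)\leq 2i(c,b)+1\leq 2\bigl(i(a,b)-1\bigr)+1$; adding, $d_{\mathcal{NC}'}(a,b)\leq 1+2\bigl(i(a,b)-1\bigr)+1=2i(a,b)\leq 2i(a,b)+1$.

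I do not expect a genuine obstacle: the surgery and the homological identity $[a]=[c_{1}]+[c_{2}]$ are insensitive to the genus, and the only use of $g\geq 3$ in Lemma~\ref{nonseparating_curve_graphs_are_quasi-isometric}, namely the base case, is bypassed by the modified edge relation. The one thing to check carefully is that $c_{i}$ is an admissible vertex (essential and nonseparating) even on a surface as small as $N_{1,2}$ or $N_{2,1}$, and this follows purely from $[c_{i}]\neq 0$ as above. I note in passing that for $i(a,b)\geq 1$ the argument in fact yields the slightly sharper bound $d_{\mathcal{NC}'}(a,b)\leq 2i(a,b)$; I would state it in the weaker form only to match Lemma~\ref{nonseparating_curve_graphs_are_quasi-isometric}.
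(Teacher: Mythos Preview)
Your proposal is correct and follows exactly the approach of the paper: dispose of the base case $i(a,b)\leq 1$ directly from the modified edge relation, then observe that the inductive step of Lemma~\ref{nonseparating_curve_graphs_are_quasi-isometric} carries over verbatim since the genus hypothesis was used only in the base case. The paper's own proof is a two-sentence remark to precisely this effect; your extra checks that $c$ is an admissible vertex and your observation of the sharper bound $2i(a,b)$ for $i(a,b)\geq 1$ are correct but not recorded there.
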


\begin{proof}[Proof of Lemma~\ref{augmented_nonseparating_curve_graphs_are_quasi-isometric}]
We prove the lemma by induction on $i(a,b)$.
In the base case where $i(a,b)\leq 1$, we have $d_{\mathcal{NC}'}(a,b)\leq 1$ by the definition.
The rest of the proof goes through unmodified (see Proof of Lemma~\ref{nonseparating_curve_graphs_are_quasi-isometric}).
\end{proof}

By Lemma~\ref{augmented_nonseparating_curve_graphs_are_quasi-isometric}, we can see $\mathcal{NC}'(N)$ and $\mathcal{NC}(N)$ are quasi-isometric, and so we see the hyperbolicity of the augmented nonseparating curve graph for $N=N_{g}^{f}$ of $g=1$ and $f\geq 3$ or $g=2$ and $f\geq 1$ by the same argument as we did in Section~\ref{Nonseparating curve graphs of nonorientable surfaces are uniformly hyperbolic}.


\vspace{1.0mm}
\par
{\bf Acknowledgements:} The author wishes to thank Mitsuaki Kimura to giving her the encouragement to work on this research and for valuable advice. The author is also deeply grateful to B\l a\.{z}ej Szepietowski, Genki Omori, and Takuya Katayama for answering her questions very kindly. The author was supported by the Foundation of Kinoshita Memorial Enterprise, by a JSPS KAKENHI Grant-in-Aid for Early-Career Scientists, Grant Number 21K13791, and by JST, ACT-X, grant number JPMJAX200D.


\begin{thebibliography}{99}
\bibitem{Aougab13}
T. Aougab, {\it Uniform hyperbolicity of the graphs of curves}, Geom.\ Topol.\ {\bf 17} (2013), no. 5, 2855--2875.

\bibitem{Aramayona--Valdez18}
J. Aramayona and F. Valdez, {\it On the geometry of graphs associated to infinite-type surfaces}, Math.\ Z.\ {\bf 289} (2018), no. 1--2, 309-–322.

\bibitem{Atalan--Korkmaz14}
F. Atalan and M. Korkmaz, {\it Automorphisms of curve complexes on nonorientable surfaces}, Groups Geom.\ Dyn.\ {\bf 8} (2014), no. 1, 39-–68.

\bibitem{Bestvina--Fujiwara07}
M. Bestvina and K. Fujiwara, {\it Quasi-homomorphisms on mapping class groups}, Glas.\ Mat.\ Ser.\ I\hspace{-0.1em}I\hspace{-0.1em}I, {\bf 42} (62) (2007), no. 1, 213--236.

\bibitem{Bowden--Hensel--Webb22}
J. Bowden, S. W. Hensel, and R. Webb, {\it Quasi-morphisms on surface diffeomorphism groups}, J.\ Amer.\ Math.\ Soc.\ {\bf 35} (2022), no.1, 211--231.

\bibitem{Bowditch06}
B. H. Bowditch, {\it Intersection numbers and the hyperbolicity of the curve complex}, J.\ Reine Angew.\ Math.\ {\bf 598} (2006), 105--129.

\bibitem{Bowditch14}
B. H. Bowditch, {\it Uniform hyperbolicity of the curve graphs}, Pacific J.\ Math.\ {\bf 269} (2014), no. 2, 269--280.


\bibitem{Clay--Rafi--Schleimer14}
M. T. Clay, K. Rafi, and S. Schleimer,{\it Uniform hyperbolicity of the curve graph via surgery sequences}, Algebr.\ Geom.\ Topol.\ {\bf 14} (2014), 3325--3344.




\bibitem{Farb--Margalit12}
B. Farb and D. Margalit, {\it A primer on mapping class groups}, Princeton Mathematical Series, {\bf 49}, Princeton University Press, Princeton, NJ, 2012.






\bibitem{Hensel--Przytycki--Webb15}
S. Hensel, P. Przytycki, and R. C. H. Webb, {\it 1-slim triangles and uniform hyperbolicity for arc graphs and curve graphs}, J.\ Eur.\ Math.\ Soc.\ (JEMS) {\bf 17} (2015), no. 4, 755--762.

\bibitem{Ivanov87}
N. V. Ivanov, {\it Complexes of curves and Teichmüller modular groups}, Uspekhi Mat.\ Nauk {\bf 42}(1987), 49-–91, English transl.: Russ. Math.\ Surv.\ {\bf 42}, (1987) 55-–107.


\bibitem{Kimura--Kuno21}
M. Kimura and E. Kuno, Quasimorphisms on nonorientable surface diffeomorphism groups, arXiv:2111.05540 [math.GT].

\bibitem{Kuno16}
E. Kuno, {\it Uniform hyperbolicity for curve graphs of non-orientable surfaces}, Hiroshima Math.\ J.\ {\bf 46} (2016), no. 3, 343-–355.




\bibitem{Masur--Minsky99}
H. A. Masur and Y. N. Minsky, {\it Geometry of the complex of curves. I. Hyperbolicity}, Invent.\ Math.\ {\bf 138} (1999), no. 1, 103--149.


\bibitem{Masur--Schleimer13}
H. A. Masur and S. Schleimer, {\it The geometry of the disk complex}, J.\ Amer.\ Math.\ Soc.\ {\bf 26} (2013), no. 1, 1--62.



\bibitem{Przytycki--Sisto17}
P. Przytycki and A. Sisto, {\it A note on acylindrical hyperbolicity of mapping class groups}, Hyperbolic geometry and geometric group theory, 255-–264, Adv.\ Stud.\ Pure Math., {\bf 73}, Math.\ Soc.\ Japan, Tokyo, 2017.


\bibitem{Rasmussen20}
A. Rasmussen, {\it Uniform hyperbolicity of the graphs of nonseparating curves via bicorn curves}, Proc.\ Amer.\ Math.\ Soc.\ {\bf 148} (2020), no. 6, 2345–-2357.

\bibitem{Schleimer06}
S. Schleimer, {\it Notes on the complex of curves}, available at https://homepages.warwick.ac.uk/~masgar/Maths/notes.pdf.



\bibitem{Stukow17}
M. Stukow, {\it Subgroups generated by two Dehn twists on a nonorientable surface}, Topology Proc.\ {\bf 50} (2017), 151–-201.

\bibitem{Szepietowski08}
B. Szepietowski, {\it A presentation for the mapping class group of a non-orientable surface from the action on the complex of curves}, Osaka J.\ Math.\ {\bf 45} (2008), no. 2, 283--326.


\end{thebibliography}
\end{document}